\documentclass[a4paper]{article}
\usepackage{amsmath,amsthm,amssymb,latexsym,epsfig,graphicx,enumerate}

\setlength{\textheight}{22 cm} \setlength{\textwidth}{16 cm}
\setlength{\oddsidemargin}{.25cm} \setlength{\evensidemargin}{.25cm}
\usepackage{graphicx}               % Use pdf, png, jpg, or eps{\S} with pdflatex; use eps in DVI mode
                                % TeX will automatically convert eps --> pdf in pdflatex
\usepackage{amssymb}
\usepackage{pdfsync}
\usepackage{color}
%\title{Brief Article}
%\author{The Author}
%\date{}                            % Activate to display a given date or no date

%\begin{document}
%\maketitle
%\section{}
%\subsection{}

%\end{document}
\setlength{\topmargin}{-1cm} \setlength{\headheight}{1cm}

\def\R{{\mathbb R}}

\def\S{{\mathbb S}}
\def\Rd{{\mathbb {R}^d}}
% some script letters

%temporary change

%% Some common math functions (dint= double integral)

\def\diam{{\mathop {\rm diam}}}
\def\dist{{\mathop {\rm dist}}}

%%if  need space: \def\Re{{\mathop {\rm Re\,}}}

%%chi shifted vertically:
\def\newchi{\raise2pt\hbox{$\chi$}}
%%dagger for footnotes raised a little bit
\def\newdag{\raise4pt\hbox{\dag}}
\def\eps{\varepsilon}

\numberwithin{equation}{section}

\newtheorem*{acknowledgements}{Acknowledgements}
\newtheorem{theorem}{Theorem}[section]
\newtheorem{lemma}[theorem]{Lemma}
\newtheorem{corollary}[theorem]{Corollary}

\title{The regularity of the boundary of a multidimensional aggregation patch}
\author{A.\ Bertozzi,  J.\ Garnett, T.\ Laurent, and J.\ Verdera}
\date{}

\begin{document}
\baselineskip=15pt \overfullrule=0pt

\maketitle
\section{Introduction}\label{Introduction}
%{\color{red} Active scalar problems}
 Active scalar problems are a wide class of research topics
%interesting class of problems
 in fluid dynamics for which basic
questions of existence and
 regularity pose challenging analysis problems that are often viewed as simpler `analogues' of the
  famous Clay Math Prize Navier-Stokes problem.
   One subclass of such problems are the famous `vortex patches' - exact $L^\infty$ solutions
   of the incompressible inviscid two-dimensional fluid equations in which the scalar vorticity is the characteristic function of an evolving domain.
The classical theory of general $L^\infty$ weak solutions dates back
to Yudovich \cite{Yudovich63} in the early 1960s.  However, the
challenging problem of the long time regularity of the patch
boundary was not settled until the early 1990s by Chemin
\cite{CheminPatch} using methods from para-differential calculus. A
geometric harmonic analysis proof was developed later by the first
author and Constantin in \cite{BC}.

More recent works have studied the dynamics of active scalars with a
gradient flow structure.  This opens up the possibility of using
variational methods, including tools from optimal transport theory.
The general problem has the structure

\begin{equation}
\partial_t {\rho} + \text{div} (\rho v)=0 , \quad v=-\nabla K*\rho
\label{aggeq}
\end{equation}
with many papers written to understand various subclasses of this
problem
\cite{BenedettoCagliotiPulvirenti97,BB10,BCL,BL,BLR,BW,BCM,BDP,BV,BCMo,BCKSV,BuCM,BDi,CDFLS,cr09,DP,GP,HJD,HP,Keller-Segel-70,L,LR1,LR2,LT,LZ,ME,MCO,TB,TBL,tosc:gran:00}.
This equation arises in many applications including materials
science
 \cite{HP,NPS01,Poupaud02}, cooperative control \cite{GP}, granular flow
 \cite{Carrillo-McCann-Villani03,Carrillo-McCann-Villani06,tosc:gran:00}, biological swarms\cite{ME,MEBS,TB,TBL}, vortex densities in superconductors
\cite{Weinan:1994,AS,AMS,DZ,Masmoudi} and chemotaxis
\cite{BCKSV,Keller-Segel-70,BCM,BDP}. Some of the recent literature
has studied finite time singularities and local vs global
well-posedness for both the inviscid case (\ref{aggeq})
\cite{BB10,BCL,BGL,BL,BLR,BV,BS,CCFG,CCFGG,HJD,HB,L} and the cases with
various kinds of diffusion \cite{BRB,BCM,BS,LR1,LR2, SV} in multiple
dimensions. The well-known Keller-Segel problem typically has a
Newtonian potential and linear diffusion. For the non-diffusive
problem  (\ref{aggeq}), of particular interest is the transition
from smooth solutions to weak and measure solutions with mass
concentration.

This paper concerns (\ref{aggeq}) with the special case of $K=N$ the
Newtonian potential. This equation is exactly orthogonal to the
classical 2-D inviscid incompressible fluid equations in vorticity
form, in the sense that the velocity field is perpendicular to that
given by the  Bio-Savart law.  Because of its gradient flow
structure, the model makes sense in all space dimensions and we
consider this problem in general dimension greater than one.  We are
interested in a special class of solutions called ``aggregation
patches''.  These are particular $L^\infty$ weak solutions in which
the density $\rho$ is a time dependent constant times the
characteristic function of an evolving domain. Such solutions only
exist for special kernels such as the Newtonian potential.  For this
potential a previous paper \cite{BLL} established a sharp $C^\gamma$
and $L^\infty$ regularity theory for solutions of (\ref{aggeq}) for
$K=N$ in general dimension.  The proof generalizes the ideas of
\cite{MB} for $C^\gamma$ solutions of the vorticity equation and
\cite{Yudovich63} for the $L^\infty$ theory. That work also
developed a numerical method for computing aggregation patch
boundaries and showed some interesting examples in both two and
three dimensions.  Of note is that these numerical solutions develop
nontrivial geometric singularities at the blowup time - typically
with mass concentrated along a ``skeleton''-like structure.  The
simplest example with a trivial behavior is the collapsing sphere
(or disk in 2-D) which due to symmetry collapses to a dirac mass at
a single point at the blowup time.  However elliptical initial data
yield solutions that collapse onto a line segment and more complex
initial conditions appear to collapse to a structure with branched
arms.  In an analogous fashion, the spreading solutions (backward
time) were also studied both theoretically and numerically.  In
\cite{BLL} the authors develop a rigorous theory proving $L^1$
convergence of the spreading patch solutions to an exact spherically
symmetric similarity solution. However numerical simulations suggest
that the weak $L^1$ convergence theory can not be made sharper due
to the development of defects in the patch boundary in the approach
to the similarity solution.

These nontrivial dynamics open up the natural question of the
regularity of the patch boundary for these aggregation patch
solutions.  In this paper we establish this result, working in
H\"older spaces as was done for the vortex patch boundary problem in
fluids in \cite{CheminPatch,BC}. One key idea in \cite{BC} was a
geometric lemma using cancellation properties of the gradient of the
Biot-Savart kernel on half-disks and yielding a uniform estimate for
the gradient of the velocity field with constants depending
logarithmically on quantities measuring the smoothness of the
boundary. We extend this logarithmic inequality to the
multidimensional case involving even singular integrals and
cancellation on half-spheres. It is worth mentioning that such kind
of estimates have appeared before, without mention to the
logarithmic dependence on constants, in connection with problems of
classical analysis (see, for instance, \cite{MOV} and \cite{MOV2}). See also \cite{CG}, in which
they appear in connection with the Muskat problem.

 A difficulty we have to confront in this paper that did not appear
in the incompressibile fluids case is finding defining functions of
the patch for non-zero times. The defining function for the initial
patch transported by the flow is not smooth, because the field is
not divergence free. We find a genuine smooth defining function
adapted to our context, which leads to a commutator formula
expressing the gradient of the velocity applied to the gradient of
our defining function as a commutator of matrix valued singular
integrals.  A special H\"{o}lder estimate in terms of the uniform norm
of the gradient of the velocity field is derived.
  In addition, to prove our result we first develop the local existence and continuation theory for the patch boundary problem in general dimensions,
  which  requires estimates of the transport map of the patch boundary in local coordinates.
    The technical apparatus needed at this point is more involved than the two dimensional case where one can
    parametrize by the circle, at least in the simply connected
    case.  As a counterpart, we can work without additional effort on domains with
    holes  and even on open sets made of finitely many pieces with disjoint closures.
More details on the structure of the proof and information on the
organization of the paper are given in the next subsections.

\subsection{The main result} \label{sec1}

Let $d \geq 2$ and let $N(y)$ be the fundamental solution of the
Laplace equation in~$\R^d.$  Thus $N(y) = \frac{1}{2\pi} \log |y|$
in dimension~$d =2$ and
$$
N(y) =  - \frac{1} {(d-2) \omega_{d-1}} \, \, \frac{1}{|y|^{d-2}},
\quad d \ge 3,
$$
where $ \omega_{d-1}$ is the $d-1$-dimensional surface measure of
the unit sphere in $\R^d.$ We consider the aggregation equation
\begin{equation}\label{eq1.1}
\frac{\partial \rho}{\partial t} + \operatorname{div}(\rho v) =0
\end{equation}
\begin{equation}\label{eq1.2}
v = -\nabla N * \rho
\end{equation}
with initial data
\begin{equation}\label{eq1.3}
\rho(x,0) = \newchi_{D_0}
\end{equation}
where $\newchi_{D_0}$ is the indicator function of a bounded domain
$D_0 \subset  \R^d.$  We  now fix $0 < \gamma < 1$ and take $D_0$ to
be a bounded $C^{1+\gamma}$ domain (a domain with smooth boundary of
class $C^{1+\gamma}$; a formal definition will be presented in
section \ref{sec2}).  Then we have the following Theorem.

\begin{theorem}\label{theo1.1}
If $D_0$ is a  $C^{1 + \gamma}$ domain, then the initial value
problem~\eqref{eq1.1}, \eqref{eq1.2} and \eqref{eq1.3} has a
solution given by
\begin{equation}\label{eq1.4}
\rho(x,t) = \frac{1}{1 -t} \newchi_{D_t}(x), \quad x \in \Rd, \quad
0 \le t < 1
\end{equation}
where $D_t$ is a  $C^{1 + \gamma}$ domain for all $0 \leq t < 1$.
\end{theorem}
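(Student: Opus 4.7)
The plan is to first reduce the theorem to a purely geometric statement about the persistence of $C^{1+\gamma}$ regularity of $\partial D_t$ under the flow of $v$. Since $-\Delta N=\delta$, one has $\operatorname{div} v=\rho$, so if $\rho(x,t)=f(t)\newchi_{D_t}(x)$ and $D_t$ is the image of $D_0$ under the flow map $X_t$ of $v$, then the continuity equation on the interior of $D_t$ reduces to the ODE $f'(t)+f(t)^2=0$ with $f(0)=1$, yielding exactly $f(t)=1/(1-t)$. Thus the whole content of the theorem is that $D_t:=X_t(D_0)$ stays of class $C^{1+\gamma}$ for all $t<1$.

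I would follow the Chemin/Bertozzi--Constantin scheme adapted to the compressible setting. First, develop a local existence and continuation theory: parametrize $\partial D_t$ locally as a graph (or, to handle holes and finitely many components, by a finite atlas of local charts) and evolve those parametrizations by $\dot X=v(X,t)$, showing by a standard fixed-point argument in a H\"older class that a $C^{1+\gamma}$ patch persists for a short time and that the only way it can fail to persist up to $t=1$ is if $\|\partial D_t\|_{C^{1+\gamma}}$ blows up at some $t^\ast<1$. Next, to bound this norm, introduce a genuine time-dependent defining function $\phi_t$ of $D_t$ (not the pull-back $\phi_0\circ X_t^{-1}$, which is not smooth because $\operatorname{div} v\neq 0$), and derive a transport-type evolution for $\nabla\phi_t$ in which the singular commutator
$$[\nabla v(\cdot,t),\,\nabla\phi_t]$$
of matrix-valued Calder\'on--Zygmund operators appears on the right-hand side. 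The point of writing the formula as a commutator is that the rough factor $\newchi_{D_t}$ only enters inside a difference that vanishes on $\partial D_t$, which buys back one order of H\"older regularity.

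The key analytic estimates are then (i) a logarithmic bound
$$\|\nabla v(\cdot,t)\|_{L^\infty}\le C\bigl(1+\log(1+\|\partial D_t\|_{C^{1+\gamma}})\bigr),$$
obtained by splitting the singular integral $\nabla^2 N\ast\newchi_{D_t}$ into a contribution that exploits the cancellation of $\nabla^2 N$ on half-spheres (the higher-dimensional analogue of the half-disk lemma of \cite{BC}, in the spirit of \cite{MOV,MOV2,CG}) and a tail that is estimated by $C^{1+\gamma}$ control of the boundary; and (ii) a H\"older estimate for the commutator above, of the form $\|[\nabla v,\nabla\phi_t]\|_{C^{\gamma}}\le C\|\nabla v\|_{L^\infty}\|\nabla\phi_t\|_{C^{\gamma}}$, tailored to the singular integral at hand. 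Combining (i)--(ii) with the commutator equation gives a differential inequality for $Y(t):=\|\nabla\phi_t\|_{C^{\gamma}}+\|\partial D_t\|_{C^{1+\gamma}}$ of the form $Y'(t)\le C\,Y(t)\log(1+Y(t))$, whose solution is double-exponential in $t$ but finite for every $t<1$. This prevents blow-up of the $C^{1+\gamma}$ norm in finite time, so the local solution continues up to $t=1$, completing the proof.

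The main obstacle will be item (i)--(ii): constructing the adapted defining function $\phi_t$, proving that the relevant commutator is a bounded operator on $C^\gamma$ with a constant depending \emph{linearly} on $\|\nabla v\|_{L^\infty}$, and squeezing out the logarithmic (rather than polynomial) dependence in the $L^\infty$ bound by means of the half-sphere cancellation. The compressibility of the flow, absent in the classical vortex-patch problem, is what forces the introduction of $\phi_t$ and complicates both the commutator structure and the control of $X_t$ in local coordinates.
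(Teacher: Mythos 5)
Your proposal is correct and follows essentially the same route as the paper: reduction of the amplitude to the ODE $f'+f^2=0$, local existence and continuation for the contour dynamics in $C^{1+\gamma}$ local charts, an adapted (non--pulled-back) defining function whose gradient evolves by a commutator of singular integrals, the logarithmic bound on $\|\nabla v\|_\infty$ via cancellation of the even kernel $\nabla^2 N$ on half-spheres, and a Gr\"onwall argument yielding double-exponential (hence finite) control of the $C^{1+\gamma}$ data. The only minor difference is bookkeeping: the paper closes the continuation argument by tracking the local existence time through $q(D_t)$, $\sigma(\partial D_t)$ and $\operatorname{diam}(D_t)$ rather than through an a priori bound on $\|DX(\cdot,t)\|_{\gamma,\partial D_0}$ itself, but this is within the same strategy you describe.
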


As the proof shows,  the preceding result also holds when $D_0$ is a
union of finitely many  $C^{1 + \gamma}$ domains with disjoint
closures. The conclusion is that $D_t$ is of the same type  for all
$0 \leq t < 1$. It has been recently proved in~\cite{BLL} that the
equation~\eqref{eq1.1}--\eqref{eq1.2} has a unique solution in the
weak sense for each initial condition $\varrho_0(x)$ in
$L^\infty(\R^d) \cap L^1(\R^d).$ If the initial condition is the
indicator function of a bounded domain~$D_0$, then one has
\eqref{eq1.4}. In this case one speaks of aggregation patches, in
analogy with the vortex patches for the vorticity equation
associated with the planar Euler system (see~\cite[Chapter~8]{MB}).
Thus our theorem solves the boundary regularity problem for
aggregation patches. See~\cite{BC}, \cite{CheminPatch} or
\cite[Chapter~8]{MB} for the analogous result for the vorticity
equation in the plane.

We describe a convenient reformulation of the problem that will be
used throughout the rest of the paper.

Set $s=\log(\frac{1}{1-t}),$ so that $0 \le s < \infty$ if and only
if $0 \le t < 1.$ Define
$$
\tilde{\rho}(x,s) = (1-t)\rho(x,t) \quad \text{and} \quad \tilde
v(x,s) = (1-t) v(x,t).
$$
Then, if the initial condition is \eqref{eq1.3}, \eqref{eq1.1}  is
equivalent to the transport equation
\begin{equation}\label{eq1.6}
\frac{\partial \tilde \rho(x,s)}{\partial s} + \nabla \tilde
\rho(x,s) \cdot \tilde v (x,s)= 0.
\end{equation}
The flows (or particle trajectories) in the time variables $t$ and
$s$ are defined respectively by the ODE
$$
\frac{d X(x,t)}{dt} = v( X(x,t),t), \quad X(x,0)=x
$$
and
$$
\frac{d \tilde X(x,s)}{ds} = \tilde v(\tilde X(x,s),s), \quad \tilde
X(x,0)=x.
$$
They are the same, in the sense that $\tilde X(x,s)= X(x,t).$ Hence
the solution of the transport equation~\eqref{eq1.6} with initial
condition $\tilde \rho (x,0) =
\newchi_{D_0}(x)$ is
$$
\tilde \rho(x,t)= \newchi_{\tilde D_s}(x),\quad \tilde D_s = \tilde
X(D_0,s)= X(D_0,t) = D_t = D_{1-e^{-s}}.
$$
Dropping the tildes to simplify the writing and denoting again by
$t$ the new time $s$, we conclude that the problem
\eqref{eq1.1}--\eqref{eq1.3} is equivalent to the non-linear
transport equation
\begin{equation}\label{eq1.7}
\frac{\partial \rho(x,t)}{\partial t} + \nabla  \rho(x,t) \cdot v
(x,t)= 0
\end{equation}
\begin{equation}\label{eq1.8}
v(x,t)= (-\nabla N * \rho)(x)
\end{equation}
with initial condition
\begin{equation}\label{eq1.9}
\rho(x,0) = \newchi_{D_0}(x).
\end{equation}
Theorem~\ref{theo1.1} can be then reformulated as follows.

\begin{theorem}\label{theo1.2}
If $D_0$ is a  $C^{1 + \gamma}$ domain then the initial value
problem~\eqref{eq1.7}, \eqref{eq1.8} and \eqref{eq1.9} has a
solution given by
\begin{equation*}\label{rot}
\rho(x,t) = \newchi_{D_t}(x), \quad x \in \Rd, \quad t \in \R,
\end{equation*}
where $D_t$ is a  $C^{1 + \gamma}$ domain for all $t \in \R$.
\end{theorem}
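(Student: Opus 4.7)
My plan is to follow the Chemin--Bertozzi--Constantin strategy for vortex patches, tracking $\partial D_t$ via a defining function and closing an a priori $C^{1+\gamma}$ estimate by means of a logarithmic bound on $\|\nabla v\|_{L^\infty}$. Two features specific to \eqref{eq1.8} complicate the classical picture: the velocity is not divergence free (indeed $\operatorname{div} v = -\rho$), so a naively transported defining function loses regularity; and we wish to admit $d\ge 2$ and possibly disconnected $D_0$, so we cannot rely on a single global parametrization of $\partial D_t$ by a circle.

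First I would set up local-in-time existence. Choose a finite $C^{1+\gamma}$ atlas of $\partial D_0$ and evolve each local graph parametrization under $\dot X = v(X,t)$ by Picard iteration in a space of $C^{1+\gamma}$ parametrizations. The required bound is that, for $D$ a $C^{1+\gamma}$ domain, $v = -\nabla N * \newchi_{D}$ is $C^{1+\gamma}$ on a neighborhood of $\partial D$ with quantitative constants; the H\"older theory developed in \cite{BLL} supplies this. One then checks that the iteration closes on a short time interval and that the charts glue consistently, which is standard book-keeping but requires care since the transport map may push a chart out of its coordinate patch, so a sufficient uniform lower bound on the charts' radii in terms of the $C^{1+\gamma}$ geometry must be tracked.

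The heart of the proof is the a priori estimate. For each $t$ I would construct a genuine $C^{1+\gamma}$ defining function $\varphi(\cdot,t)$ for $D_t$ (with $\nabla\varphi\ne 0$ on $\partial D_t$) built from the current shape of $D_t$ rather than by transporting $\varphi_0$, precisely because the flow loses a derivative when $\operatorname{div} v \ne 0$. Computing $\nabla v\cdot\nabla\varphi$ with $v=-\nabla N*\newchi_{D_t}$ then yields a commutator of a matrix-valued singular integral with multiplication by a function related to $\varphi$; the commutator structure lets one bound the $C^\gamma$ norm of $\nabla v\cdot\nabla\varphi$ by $\|\nabla v\|_{L^\infty}\|\nabla\varphi\|_{C^\gamma}$ without losing a derivative. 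Coupled with the multidimensional logarithmic estimate advertised in the introduction,
\[
\|\nabla v(\cdot,t)\|_{L^\infty} \le C\bigl(1+\log(1+\|\nabla\varphi(\cdot,t)\|_{C^\gamma})\bigr),
\]
obtained from cancellation of $\nabla^2 N$ on half-balls centred on $\partial D_t$, this gives a differential inequality
\[
\frac{d}{dt}\log\bigl(1+\|\nabla\varphi\|_{C^\gamma}\bigr) \le C\bigl(1+\log(1+\|\nabla\varphi\|_{C^\gamma})\bigr),
\]
whose solution is finite on every bounded time interval.

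Running the estimate forward and backward in $t$ yields global $C^{1+\gamma}$ control of $\partial D_t$ for all $t\in\R$, which together with the local existence theory and the standard continuation criterion (the patch persists as long as $\|\nabla\varphi\|_{C^\gamma}$ stays finite) gives Theorem~\ref{theo1.2}. The main obstacle I anticipate is the third step: manufacturing a defining function whose $C^{1+\gamma}$ norm faithfully encodes that of $\partial D_t$ \emph{and} evolves controllably in spite of the non-divergence-free flow, and then identifying the right commutator formula so that the nonlinear feedback between $\nabla v$ and $\nabla\varphi$ closes with only a logarithmic loss. The multidimensional cancellation lemma is nontrivial but, given the authors' references to \cite{MOV,MOV2,CG}, I expect it to be essentially a packaging problem once the correct quantity is identified.
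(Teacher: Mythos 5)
Your overall architecture (local existence in $C^{1+\gamma}$ charts via Picard, a logarithmic bound on $\|\nabla v\|_\infty$, a commutator estimate for the evolution of $\nabla\Phi$, Gr\"onwall, continuation) matches the paper's. But the step you yourself identify as the heart of the argument is left with a genuine gap, and the route you sketch for it would not close. You propose to build, at each time $t$, a defining function for $D_t$ ``from the current shape of $D_t$ rather than by transporting $\varphi_0$.'' A defining function manufactured anew from the instantaneous geometry satisfies no evolution equation, so there is nothing to which you can apply the material derivative, no commutator formula, and no Gr\"onwall loop: the a priori estimate on $\|\nabla\Phi(\cdot,t)\|_\gamma$ in terms of $\int_0^t\|\nabla v\|_\infty$ never gets off the ground. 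The paper's resolution is the opposite of what you suggest: it \emph{does} transport $\Phi_0$, but along the modified transport equation $\partial_t\Phi+v\cdot\nabla\Phi=-\Phi\chi_{D_t}$, i.e.\ $\Phi(\cdot,t)=e^{-t}\,\Phi_0\circ X^{-1}$ inside $D_t$ and $\Phi_0\circ X^{-1}$ outside. The damping factor $e^{-t}$ exactly compensates the jump of $\det\nabla X^{-1}$ (which is $e^{t}$ inside and $1$ outside, since $\operatorname{div}v=-\chi_{D_t}$), and an entire section of the paper is devoted to proving that the resulting $\nabla\Phi$ has no jump across $\partial D_t$, so that $\Phi(\cdot,t)$ is a genuine $C^{1+\gamma}$ defining function. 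Only then does the identity $\chi_{D_t}\Phi=\nabla N*(\chi_{D_t}\nabla\Phi)$ combine with the evolution equation to produce the commutator formula
\begin{equation*}
\frac{D}{Dt}\nabla\Phi(x,t)=\int_{D_t}\nabla^2N(x-y)\bigl(\nabla\Phi(x)-\nabla\Phi(y)\bigr)\,dy,
\end{equation*}
which is what yields the $C^\gamma$ bound with only a factor $\exp\bigl(C\int_0^t(1+\|\nabla v\|_\infty)\bigr)$.

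Two secondary points. First, your logarithmic estimate is stated in terms of $\|\nabla\varphi\|_{C^\gamma}$ alone; by scale invariance it must involve the ratio $\|\nabla\Phi\|_\gamma/|\nabla\Phi|_{\inf}$, so you also need the companion a priori \emph{lower} bound on $|\nabla\Phi|_{\inf}$ along the flow (the paper proves both). Second, the continuation argument requires that the local existence time depend only on quantities you can bound a priori ($q(D_t)$, $\sigma(\partial D_t)$, $\operatorname{diam}(D_t)$), not on $\|DX(\cdot,t)\|_{\gamma}$ itself, for which no a priori estimate is proved; your phrasing of the continuation criterion glosses over this, and the quantitative form of the local existence theorem is what makes it work.
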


The problem \eqref{eq1.7}--\eqref{eq1.9} for $d=2$ is similar to the
vorticity equation for incompressible perfect fluids. The difference
is that the velocity field in the vorticity equation is given by
$\nabla^{\perp}N *\rho,$ which is an orthogonal gradient and,
therefore, is divergence free. Instead the field \eqref{eq1.8} has
divergence $-\rho.$

\subsection{Outline of the paper}
The proof of Theorem \ref{theo1.2} is in two steps. First we look at
the ODE giving the flow

\begin{equation}\label{flux0}
\frac{dX(\alpha,t)}{dt} =v (X(\alpha,t),t),\quad X(\alpha,0) =
\alpha, \quad \alpha \in \R^d, \quad t \in \R,
\end{equation}
where the velocity field is
\begin{equation}\label{velocity0}
v(x,t) = - (\nabla N *\chi_{D_{t}})(x),\quad x \in \mathbb{R}^{d},
\quad D_t = X(D_0,t).
\end{equation}
Following Yudovich \cite{Yudovich63} (see ~\cite[Chapter~8]{MB} for
a modern exposition),  the authors in
  \cite{BLL}  prove that \eqref{flux0}-\eqref{velocity0} has a
unique solution and that for each $t$ the mapping $\alpha
\rightarrow X(\alpha,t)$ is a homeomorphism of $\R^d$ onto itself
satisfying a H\"older condition of order $\beta(t),$ with $\beta(t)$
decreasing exponentially to $0$ as $t$ tends to $\infty.$ This does
not use the smoothness of the boundary of the initial domain $D_0$
and, in fact, it holds for an initial condition in
$L^\infty(\R^d)\cap L^1(\R^d),$  with \eqref{velocity0} modified
appropriately.
 Assuming that $D_t$ is a $C^{1+\gamma}$ domain for $t$ in some
time interval around $t=0,$ one can view equation \eqref{flux0} as
an ODE in the Banach space $C^{1+\gamma}(\partial D_0, \Rd).$ This
ODE can be solved for short times by applying the Picard theorem in
$C^{1+\gamma}(\partial D_0, \Rd)$ and thus one gets a flow of
$C^{1+\gamma}$-diffeomorphisms solving
\eqref{flux0}-\eqref{velocity0}. By uniqueness of the Yudovich flow
one concludes that the restriction of $X(\cdot,t)$ to $\partial D_0$
is of class $C^{1+\gamma}$ on the surface $\partial D_0.$ In other
words, the Yudovich flow is, for short times, of class
$C^{1+\gamma}$ in the directions tangential to $\partial D_0.$ This
is discussed in section \ref{sec2}. Theorem  \ref{theo2.1} provides
the local existence result. One should remark that the statement of
Theorem \ref{theo2.1} includes a precise lower bound for the size of
the time interval on which the solution exists, which will be used
later on when dealing with long time existence. Showing that the
Picard theorem can be applied is equivalent to various estimates,
which are collected in Theorem \ref{theo2.2}. Its proof is presented
in sections \ref{sec4} and \ref{sec5}. One needs bounds for the
action of principal value singular integral operators on H\"{o}lder
classes on smooth surfaces. The preliminary Section~\ref{sec3} gives
two well-known lemmas in the precise forms that we need for the
proof of Theorem~\ref{theo2.2}.

The second step consists in proving that the Yudovich flow is of
class $C^{1+\gamma}$ in the directions tangential to $\partial D_0$
for all times. This requires a priori estimates for the quantities
that determine the size of the local existence interval. The most
relevant are those measuring the smoothness of the boundary of a
$C^{1+\gamma}$ domain. The $C^{1+\gamma}$ smoothness of the boundary
of a domain $D_0$ is encoded in a defining function, that is, a
function $\Phi_0$ on class $C^{1+\gamma}$ in $\R^d$, vanishing
exactly on the boundary $\partial D_0$ and with non-zero gradient at
each point of $\partial D_0.$ By transporting $\Phi_0,$ that is, by
setting $\varphi (x,t) = \Phi_0(X^{-1}(x,t))$ one obtains a function
vanishing exactly on $\partial D_t,$ but with gradient $\nabla
\varphi(x,t)= \nabla \Phi_0 \circ \nabla X^{-1}(x,t)$ which may have
a jump at $\partial D_t,$ just as $\nabla X^{-1}(x,t).$ Thus
$\varphi (x,t)$ may not be a defining function for $D_t$ (and,
indeed, it is not), contrarily to what happens in the case of the
vorticity equation in the plane, for which the velocity field is
divergence free. One of the difficulties that we have to overcome is
finding a correct way of changing $\Phi_0$ by means of the flow and
still getting a genuine defining function $\Phi(x,t)$ for $D_t.$
This is done in section \ref{sec8}. Once this is achieved, we need
to get a priori estimates for the $\gamma$-H\"{o}lder semi-norm
$\|\nabla \Phi(\cdot,t)\|_\gamma$ on $\R^d$ and for the infimum of
$|\nabla \Phi(x,t)|$ on $\partial D_t.$ The subtlest estimate is
that of $\|\nabla \Phi(\cdot,t)\|_\gamma,$ which follows by bringing
into the scene an appropriate commutator between a singular integral
and a pointwise multiplication operator. This estimate is performed
in section \ref{sec7}. Once the a priori estimates on the quantities
determining the size of the local existence interval are available,
the $C^{1+\gamma}$ smoothness of $\partial D_t$ for all $t \in \R$
follows readily.

We close this section by showing that the transported defining
function is already not smooth when $d=2$ and the initial patch is
the unit disc $D_0=\{x \in \R^2 : |x| < 1\}$. The solution of the
aggregation equation \eqref{eq1.1}-\eqref{eq1.3} with initial
condition the characteristic function of the unit disc is
\begin{equation*} \rho(x,t)= \frac{1}{1-t}
\,\chi_{D(0,\sqrt{1-t})}(x), \quad x \in \R^2, \quad t < 1.
\end{equation*}
The field is
\begin{equation*}
v(x,t)= -\frac{1}{2}
\begin{cases} \frac{x}{1-t}, & \quad
|x|<\sqrt{1-t},\\*[7pt] \dfrac{x}{|x|^{2}}, &\quad |x|\ge \sqrt{1-t}
\end{cases}
\end{equation*}
and the inverse flow
\begin{equation*}
X^{-1}(x,t)=
\begin{cases} \frac{x}{\sqrt{1-t}}, & \quad
|x|< \sqrt{1-t} ,\\*[7pt] \sqrt{|x|^2+t} \; \frac{x}{|x|}, &\quad
|x| \ge \sqrt{1-t}.
\end{cases}
\end{equation*}
Take as defining function for $D_0$ the function
$\varphi_0(x)=|x|^2-1$. Transporting $\varphi_0$ by the flow we obtain
\begin{equation*}
\varphi(x,t)=\varphi_0(X^{-1}(x,t)) =
\begin{cases}
\frac{1}{1-t} \,\left(|x|^2-(1-t)\right), & \quad |x|< \sqrt{1-t}
,\\*[7pt] |x|^2-(1-t), & \quad |x| \ge \sqrt{1-t},
\end{cases}
\end{equation*}
whose gradient has a jump at the circle $|x|= \sqrt{1-t}$  except
for  $t=0.$ To correct the jump one may take
$$
\Phi(x,t)= (1-t) \chi_{D_t}(x) \varphi(x,t)+ \chi_{\R^2 \setminus
D_t}(x) \varphi(x,t),
$$
where $D_t =D(0, \sqrt{1-t}).$

\section{A Flow of \boldmath$C^{1 + \gamma}$ Surfaces}\label{sec2}

Given $x \in \Rd$ with fixed  $d \geq 2$ the cylinder with center
$x$ and radius $r$ is
$$
C(x,r)= \{x \in \Rd : | y'-x'| \leq r  \quad\text{and}  \quad
|y_d-x_d| \leq r\},
$$
where we use the standard notation $x'=(x_1,...,x_{d-1}) \in
\R^{d-1}.$
 We say that $D$ is a $C^{1+\gamma}$ domain if for each $x \in \partial D$ there exists $r>0$ such that,
  after possibly a rotation around $x$,
$$
C(x,r) \cap \partial D = \{ y \in C(x,r) : y_d = \varphi (y') \}
$$
where $\varphi$ is a function of class $C^{1+\gamma}$ in a ball
$B(x',r'), \; r'>r.$ In other words, the boundary of $D$ is locally
the graph of a $C^{1+\gamma}$ function and thus a surface of class
$C^{1+\gamma}.$  A standard argument based on a partition of unity
shows that if $D$ is
 a $C^{1+\gamma}$ domain then there exists a function $\Phi \in C^{1+\gamma}(\Rd)$ such that $D=\{x \in \Rd : \Phi(x) < 0\}$,
  $\partial D=\{x \in \Rd : \Phi(x) = 0\}$ and $\nabla \Phi(x) \neq 0$ for $x \in \partial D.$
  Such a function is called a defining function for $D$ of class $C^{1+\gamma}.$
  Conversely, by the Implicit Function Theorem, if $D$ has a defining function of class $C^{1+\gamma}$,
  then $D$ is a $C^{1+\gamma}$ domain. There is a very useful quantity measuring the $C^{1+\gamma}$ character of a domain, namely,
\begin{equation}\label{qu}
q(D) = \frac{\|\nabla\Phi\|_{\gamma}}{|\nabla \Phi|_{\inf}}
\end{equation}
 where
\begin{equation*}\label{gradientinf}
|\nabla\Phi|_{\inf} = \inf \{ |\nabla\Phi(x)| : x \in \partial D\}
\end{equation*}
and, for each set $E$ and each function $f$ defined on $E$, we
denote  by $\|f\|_{\gamma, E}$ (or by $\|f\|_{\gamma}$, if there is
no ambiguity on the domain of the function) the H\"{o}lder
$\gamma$-seminorm
$$
\|f\|_{\gamma, E}= \sup \{\frac{|f(x)-f(y)|}{|x-y|^{\gamma}} : x,y
\in E, \; x\neq y \}.
$$
%by $\|f\|_{\infty,E}$ the supremum norm of $f$ on $E$ and

The ODE providing the particle trajectories is
\eqref{flux0}-\eqref{velocity0}.  As we mentioned before, in
\cite{BLL} one proves that \eqref{flux0} has a unique solution and
that for each $t$ the mapping $\alpha \rightarrow X(\alpha,t)$ is a
homeomorphism of $\R^d$ onto itself satisfying a H\"older condition
with exponent $\beta(t)$ decreasing exponentially to $0$ as $t$
tends to $\infty.$ We call $X(\alpha,t)$ the Yudovich flow
associated with the initial condition $\chi_{D_0}.$

Assume that for $t$ in an open interval containing $0$ the
restriction of $X(\cdot,t)$ to $\partial D_0$ is in
$C^{1+\gamma}(\partial{D_0},\R^d).$ Then $\partial D_t$ is a
$C^{1+\gamma}$ domain and we have
\begin{equation*}\label{velocity2}
\begin{split}
v(x,t) &=   -(N *\nabla \chi_{D_{t}})(x) = \int_{\partial D_{t}} N
(x - y) \vec{n} (y)\,d\sigma_{t}(y)
\end{split}
\end{equation*}
where $\vec{n}$ is the exterior unit normal vector to $\partial D_t$
and $d\sigma_t$ is the surface measure on $\partial D_t.$ If
$x=X(\alpha,t)$ and we make the change of variables $y=X(\beta,t)$
we get
\begin{equation*}\label{velocity3}
\begin{split}
v(X(\alpha,t),t) &= \int_{\partial D_{0}} N(X(\alpha,t) - X
(\beta,t)) \Bigl(DX (\beta,t) (T_{1} (\beta)) \wedge \dotsb \wedge
DX (\beta,t) (T_{d-1}(\beta))\Bigr)\,d\sigma (\beta),
\end{split}
\end{equation*}
where $d\sigma$ is the surface measure on $\partial D_0$, $DX$ is
the differential of $X$ as a differentiable mapping from $\partial
D_0$ into $\Rd$ and $T_1(\beta), ..., T_{d-1}(\beta)$ is an
orthonormal basis of the tangent space to $\partial D_0$ at the
point $\beta \in
\partial D_0.$ The vector
\begin{equation}\label{extprod}
\bigwedge^{d-1}_{j=1} DX (\beta, t) (T_{j}(\beta))
\end{equation}
is orthogonal to $\partial D_t$ at the point $X(\beta,t)$ and a
different choice of the orthonormal basis $T_j(\beta), 1\leq j\leq
d-1,$  has the effect of introducing a $\pm$ sign in front of
\eqref{extprod}. We may choose the $T_j(\beta)$ so that
$\vec{n}(\beta), T_{1}(\beta), \dots, T_{d-1}(\beta)$ 
gives the standard orientation of $\mathbb{R}^d$.
Let $\Omega$ be the set of functions $X \in C^{1+\gamma}(\partial
D_0, \Rd)$ such that there exists a constant $\mu \geq 1$ for which
\begin{equation*}\label{mu}
|X(\alpha)-X(\beta)| \geq \frac{1}{\mu}|\alpha-\beta|,\quad
\alpha,\beta \in \partial D_0.
\end{equation*}
The smallest such $\mu$ is denoted by $\mu(X).$  Then $X$ is
bilipschitz and $\mu(X)$ is the Lipschitz constant of the inverse
mapping. It is clear that $\Omega$ is an open subset of
$C^{1+\gamma}(\partial D_0, \Rd).$ Given $X \in \Omega$ set
\begin{equation}\label{defFX}
F(X)(\alpha)= \int_{\partial D_{0}} N(X(\alpha) - X (\beta))
\bigwedge^{d-1}_{j=1} DX (\beta) (T_{j}(\beta)) \,d\sigma (\beta).
\end{equation}
 Therefore $X(\alpha,t)$ satisfies the ODE
\begin{equation}\label{contour}
\frac{dX (\alpha,t))}{dt} = F (X (\cdot,t)) (\alpha), \quad
X(\alpha,0) = \alpha,
\end{equation}
which is called the ``contour dynamics equation". Our plan is to
solve \eqref{contour} for short times in the open subset $\Omega$ of
the Banach space $C^{1+\gamma}(\partial D_0, \Rd)$. By uniqueness of
the trajectories equation \eqref{flux0}-\eqref{velocity0} (see,
e.g., \cite[Theorem 3.7, p.128]{BCD}), we conclude that the
restriction of the Yudovich flow $X({\cdot},t)$ to $\partial D_0$ is of
class $C^{1 + \gamma}$ for short times. In particular, $\partial D_t
= X(\partial D_0,t)$ is a surface of class $C^{1 + \gamma}$ for
short times. In a second step we prove an a priori estimate which
implies that $\partial D_t$ is of class $C^{1 + \gamma}$ for all
times, thus proving Theorem \ref{theo1.2}.

Our estimates are most conveniently performed in terms of a
particular norm defining the topology of $C^{1+\gamma}(\partial D_0,
\Rd).$  Let us momentarily drop the subindex "0" and work with a
bounded $ C^{1+\gamma}$ domain $D$. Since $\partial D$ is a compact
surface of class $ C^{1+\gamma}$ there exists $r=r(D)
>0$ such that for each $\alpha\in
\partial D$ the set $\partial D \cap C(\alpha,r)$ is the graph $\beta_d = \varphi(\beta')$ of a  $
C^{1+\gamma}$ function $\varphi$ (after a rotation around $\alpha$
if needed). The function
$$
\tilde{X}(\beta')= X(\beta',\varphi(\beta')), \quad \beta' \in
B(\alpha',r) \subset \R^{d-1},
$$
is in $ C^{1+\gamma}(B(\alpha',r))$.  Set
$$
\nu(X,\alpha)=
|X(\alpha)|+\|D\tilde{X}\|_{\infty,B(\alpha',r)}+\|D\tilde{X}\|_{\gamma,
B(\alpha',r)},
$$
where $D$ is the ordinary differential of $\tilde{X}$ and for a set
$E$ and a function $f$ on $E$  we denote by $\|f\|_{\infty,E}$ the
supremum norm of $f$ on $E.$
% and by $\|f\|_{\gamma, E}$ the H\"{o}lder seminorm
%$$
%\|f\|_{\gamma, E}= \sup
%\{\frac{|f(\beta)-f(\beta')|}{|\beta-\beta'|^{\gamma}} :
%\beta,\beta' \in E, \; \beta \neq \beta'\}.
%$$
Finally set
$$
\|X\|_{1+\gamma} = \|X\|_{1+\gamma,\, \partial D} = \sup_{\alpha \in
\partial D} \nu(X,\alpha).
$$
Different choices of the local charts yield different but equivalent
norms in $C^{1+\gamma}(\partial D, \Rd).$

We discuss now two simple facts concerning the norm of
$C^{1+\gamma}(\partial D, \Rd)$ we just defined. The first is an
estimate for the norm of the identity mapping $I.$ In the local
chart centered at $\alpha \in \partial D$ we have $\tilde{I}(\beta')
= (\beta',\varphi(\beta')), \; \beta' \in B(\alpha',r).$ Hence
$D\tilde{I}(\beta')$ is a matrix with $d$ rows and $d-1$ columns.
The matrix formed with the first $d-1$ rows is the identity in
$\R^{d-1}$ and the last row is $(\partial_1 \varphi(\beta'),...,
\partial_{d-1} \varphi(\beta') ).$ Since we can assume that $|\nabla \varphi(\beta')| \le 1, \; \beta' \in
B(\alpha',r),$ by implicit differentiation we get (see the proof of
lemma \ref{lem7.4} below) $\|\nabla \varphi \|_{\gamma,
B(\alpha',r)} \le C_d \,q(D).$  Then
$$
\nu(I,\alpha) \le |\alpha|+ C_d + C_d \,q(D).
$$
 Let $c$ be the center of mass of
$D$  and $\operatorname{diam}(D)$ its diameter. Then
\begin{equation}\label{identity}
\|I\|_{1+\gamma, \partial D} \leq \operatorname{diam}(D)+|c|+ C_d +
C_d \,q(D).
\end{equation}
The preceding inequality will be applied to $\partial D_t$ in
dealing with long time existence. On the one hand, the center of
mass is an invariant of the motion, because the kernel $\nabla N$ is
odd. Without loss of generality we assume from now on that the
center of mass of $D_0$ (and, consequently, of $D_t$) is the origin.
On the other hand, we will obtain a priori estimates for
$\diam(D_t)$ and $q(D_t)$ (see \eqref{expdiam} and \eqref{expqD}).
We conclude that the estimate \eqref{identity} is good for our a
priori estimates.

The second fact we should discuss is how one estimates the Lipschitz
constant of $X \in C^{1+\gamma}(\partial D, \Rd)$ in terms of
$\|X\|_{1+\gamma}.$ Take points $\alpha, \beta \in \partial D.$ If
$|\alpha-\beta| < r=r(D)$ we are in a local chart and then clearly
$|X(\alpha)- X(\beta)| \le \|X\|_{1+\gamma} \,|\alpha-\beta|.$
Otherwise we estimate by the uniform norm and we obtain $|X(\alpha)-
X(\beta)| \le 2 \|X\|_{1+\gamma} \le \frac{2 \|X\|_{1+\gamma}}{r} \,
|\alpha-\beta|.$ Hence
\begin{equation}\label{lipX}
|X(\alpha)- X(\beta)| \le (1+ \frac{2}{r(D)})\,\|X\|_{1+\gamma}\,
|\alpha-\beta|, \quad \alpha, \beta \in \partial D.
\end{equation}

\begin{theorem}\label{theo2.1}
The initial value problem
\begin{equation}\label{eqFX}
\frac{d X(\alpha,t)}{dt}= F(X({\cdot},t))(\alpha), \quad
X(\alpha,0)=\alpha,
\end{equation} has a unique solution $X(\alpha,t) \in C^1((-t_0,t_0), C^{1+\gamma}(\partial D_0, \Rd))$  and
$t_0$ depends only on $d, \,q(D_0), \sigma(\partial D_0)$ and
$\operatorname{diam}(D_0).$
\end{theorem}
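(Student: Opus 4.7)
The plan is to apply the abstract Picard-Lindelöf theorem to the ODE $dX/dt = F(X)$ on the open subset $\Omega \subset C^{1+\gamma}(\partial D_0, \R^d)$. The initial value is the identity $I \in \Omega$ with $\mu(I)=1$, and by \eqref{identity} the starting norm $\|I\|_{1+\gamma}$ is controlled by a constant that depends only on $d$, $\diam(D_0)$ and $q(D_0)$.

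First, I would fix a closed ball $\bar B(I,\eps) \subset C^{1+\gamma}(\partial D_0, \R^d)$ of radius $\eps$ small enough that every $X \in \bar B(I,\eps)$ still belongs to $\Omega$, with, say, $\mu(X) \le 2$. Because a small $C^{1+\gamma}$ perturbation of $I$ produces a small perturbation of its tangential differential, such an $\eps$ can be chosen depending only on $d$, $q(D_0)$, $\diam(D_0)$, and on $r(D_0)$ (which itself is controlled by $q(D_0)$ and $\diam(D_0)$); this is essentially an implicit-function-type computation in the local charts used to define $\|\cdot\|_{1+\gamma}$. Next, I would invoke the estimates collected in Theorem \ref{theo2.2}, asserting the existence of constants $M$ and $L$, depending only on $d,\, q(D_0),\, \sigma(\partial D_0)$ and $\diam(D_0)$, such that
\begin{equation*}
\|F(X)\|_{1+\gamma} \le M, \qquad \|F(X)-F(Y)\|_{1+\gamma} \le L\,\|X-Y\|_{1+\gamma}
\end{equation*}
for all $X,Y \in \bar B(I,\eps)$. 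Once these two bounds are in hand, the Banach space version of the Picard-Lindelöf theorem produces a unique $X(\cdot,t) \in C^1((-t_0,t_0),\, C^{1+\gamma}(\partial D_0, \R^d))$ solving \eqref{eqFX}, with $t_0 := \min\bigl(\eps/M,\; 1/(2L)\bigr)$, which is a quantity of exactly the claimed form.

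The core obstacle is step (ii): showing that $F$ defined by \eqref{defFX} maps $\Omega$ into $C^{1+\gamma}(\partial D_0, \R^d)$ and is Lipschitz there, with constants controlled solely by the stated data. Proving $F(X) \in C^{1+\gamma}$ requires differentiating under the integral sign in \eqref{defFX}, which turns $F$ into a singular integral whose kernel is $\nabla N(X(\alpha)-X(\beta))$ tested against the tangential derivatives of $X$. To get a H\"older bound on this quantity, one must exploit the cancellation of $\nabla N$ against the exterior normal field over a $C^{1+\gamma}$ surface parametrised through $X$, and then use Calder\'on-Zygmund-type estimates on H\"older classes on smooth surfaces. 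This is precisely the content of Theorem \ref{theo2.2} and is proved in sections \ref{sec4}-\ref{sec5} using the two lemmas of Section \ref{sec3}. The bilipschitz control $\mu(X)\le 2$ is what allows the kernel $\nabla N(X(\alpha)-X(\beta))$ to be compared to $\nabla N$ in the parameter space $\partial D_0$, so that the singular integral theory applies uniformly on $\bar B(I,\eps)$.

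Assuming Theorem \ref{theo2.2}, the quantitative dependence of $t_0$ propagates automatically: $\eps$ depends only on $d, q(D_0), \diam(D_0)$; and $M, L$ depend only on $d, q(D_0), \sigma(\partial D_0), \diam(D_0)$, yielding the required bound on $t_0$. Uniqueness in the Banach space together with the uniqueness of the Yudovich flow then identifies $X(\cdot,t)|_{\partial D_0}$ with the restriction of the global flow to $\partial D_0$, so no compatibility issues arise.
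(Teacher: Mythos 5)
Your proposal is correct and follows essentially the same route as the paper: choose a ball $B(I,\rho)\subset\Omega$ with $\rho$ controlled via Lemma \ref{lem7.4} and the Lipschitz bound \eqref{lipX} so that $\mu(X)\le 2$ there, bound $\|I\|_{1+\gamma}$ by \eqref{identity}, invoke Theorem \ref{theo2.2} for the uniform and Lipschitz bounds on $F$, and apply the Banach-space Picard theorem to get $t_0$ of the claimed form. The only cosmetic difference is that the paper obtains the bilipschitz control on $B(I,\rho)$ by the elementary triangle-inequality estimate \eqref{lipX} rather than your ``implicit-function-type'' phrasing, and this changes nothing of substance.
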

%in an interval of the form $(-t_0,t_0)$
This follows from the Picard Theorem for Banach spaces and

\begin{theorem}\label{theo2.2}
If $X \in \Omega$, then
\begin{enumerate}
\item[(a)]
\addtocounter{equation}{1}
\begin{equation*}\label{eq2.9}
F(X) \in C^{1+\gamma}(\partial D_0, \Rd)
\end{equation*}
and
\begin{equation}\label{FX}
\|F(X)\|_{1+\gamma} \le C_0 \,\mu(X)^{3d+2}
(1+\|X\|_{1+\gamma}^{2d+4}),
\end{equation}
where $C_0$ denotes a constant that depends only on $d, q(D_0),
\sigma(D_0)$ and $\operatorname{diam}(D_0).$
\item[(b)]
$X \to F(X)$ is locally Lipschitz on $\Omega$: more precisely,
\begin{equation}\label{DiF}
\|DF(X)\|  \leq C_0\,\mu(X)^{3d+8}\, (1+\|X\|_{1+\gamma}^{3d+7}),
\quad X \in \Omega,
\end{equation}
where $C_0$ denotes a constant that depends only on $d, q(D_0),
\sigma(D_0)$ and $\operatorname{diam}(D_0).$
\end{enumerate}
\end{theorem}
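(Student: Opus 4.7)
The plan is to prove Theorem~\ref{theo2.2} by differentiating the defining integral \eqref{defFX} under the integral sign, expressing both the tangential gradient of $F(X)$ and the Fr\'echet differential $DF(X)$ as principal-value singular integrals on the reference surface $\partial D_0$, and then bounding them in $C^\gamma$ via the surface singular integral estimates gathered in Section~\ref{sec3}. The bilipschitz property encoded in $\mu(X)$ is the bridge that converts kernel estimates on the smooth image surface $X(\partial D_0)$ into estimates on $\partial D_0$, at the cost of a controlled power of $\mu(X)$. Throughout, I would work in the local charts used to define $\|\cdot\|_{1+\gamma}$, reducing via a partition of unity to bounds around a single chart center $\alpha_0 \in \partial D_0$.

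For part (a), I would first check that $F(X)$ is well-defined and uniformly bounded: since $|X(\alpha)-X(\beta)| \ge \mu(X)^{-1}|\alpha-\beta|$ and $\partial D_0$ is $(d-1)$-dimensional, the singularity of $N$ is locally integrable, yielding $\|F(X)\|_\infty$ in terms of $\mu(X)$, $\|X\|_{1+\gamma}$ and $\sigma(\partial D_0)$. I would then differentiate tangentially in $\alpha$ to obtain, formally,
\begin{equation*}
\partial_j F(X)(\alpha) = \mathrm{p.v.} \int_{\partial D_0} \nabla N(X(\alpha)-X(\beta)) \cdot \partial_j X(\alpha) \bigwedge_{i=1}^{d-1} DX(\beta)(T_i(\beta)) \, d\sigma(\beta),
\end{equation*}
a borderline singular integral (kernel of order $d-1$ on a surface of dimension $d-1$) interpreted as a principal value. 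Writing $\partial_j X(\alpha) = (\partial_j X(\alpha) - \partial_j X(\beta)) + \partial_j X(\beta)$ splits the integrand into a piece with an extra H\"older factor of order $\gamma$ (hence absolutely convergent) and a genuine singular integral over $\partial D_0$ with an odd kernel. The $C^\gamma$ bound then follows by pushing forward to $X(\partial D_0)$, applying the Section~\ref{sec3} singular-integral H\"older estimates to $\nabla N$ on a $C^{1+\gamma}$ surface, and pulling back through the bilipschitz map $X$, collecting the polynomial factors in $\mu(X)$, $\|X\|_{1+\gamma}$ and $q(D_0)$ that together give \eqref{FX}.

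For part (b), differentiating $F(X+\tau Y)(\alpha)$ at $\tau=0$ in a perturbation $Y \in C^{1+\gamma}(\partial D_0,\R^d)$ produces two families of terms: a singular integral with kernel $\nabla N(X(\alpha)-X(\beta))$ paired with the cancellation factor $Y(\alpha)-Y(\beta)$, and a weakly singular integral with kernel $N(X(\alpha)-X(\beta))$ in which one factor $DX(\beta)(T_i(\beta))$ of the wedge is replaced by $DY(\beta)(T_i(\beta))$. Both are estimated by the same Section~\ref{sec3} machinery used in part (a); the factor $Y(\alpha)-Y(\beta)$ provides a tangential cancellation of size $\|Y\|_{1+\gamma}$, while the wedge perturbation contributes $\|DY\|_\gamma$. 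Linearity in $Y$ and the resulting operator-norm bound yield \eqref{DiF}, and hence local Lipschitzness of $F$ on $\Omega$.

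The main obstacle is the careful bookkeeping of powers of $\mu(X)$ and $\|X\|_{1+\gamma}$ in the $C^\gamma$ estimates of the principal-value integrals. Each such estimate requires splitting into a near-diagonal region, where the cancellation of the odd singular kernel on $X(\partial D_0)$ must be transferred back to $\partial D_0$ through the bilipschitz change of variables, and a far-field region, where $N$ or $\nabla N$ is merely integrable on the compact surface. The polynomial exponents $3d+2$, $2d+4$, $3d+8$, $3d+7$ in \eqref{FX} and \eqref{DiF} reflect the accumulation of Jacobian factors, of the comparisons between H\"older moduli on $\partial D_0$ and on $X(\partial D_0)$, and of the number of differentiations performed; matching them exactly, as opposed to settling for some larger power, is the most delicate combinatorial aspect of the argument.
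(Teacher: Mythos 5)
Your overall architecture (localize with a cutoff, differentiate \eqref{defFX} under the integral sign to obtain a principal value integral, peel off the piece carrying the H\"older difference $\partial_j X(\alpha)-\partial_j X(\beta)$, and for (b) the two families of terms coming from differentiating the kernel and the wedge) matches the paper's, but there is a genuine gap at the decisive step. After your splitting you are left with the non-cancellative piece
$$
\operatorname{p.v.}\int K(a,b)\,\partial_j Z(b)\, f(b)\,db, \qquad K(a,b)=(2-d)\,\frac{Z(a)-Z(b)}{|Z(a)-Z(b)|^{d}},
$$
which you propose to bound by ``pushing forward to $X(\partial D_0)$, applying the Section~\ref{sec3} singular-integral H\"older estimates to $\nabla N$, and pulling back.'' Section~\ref{sec3} contains no such tool: Lemma~\ref{lem3.1} handles only weakly singular kernels of order $d-1-\gamma$, and Lemma~\ref{lem3.2} bounds only commutator-type integrals $\int K(x,y)(f(x)-f(y))b(y)\,dy$, and only under the hypothesis \eqref{bKb} that the truncated integrals $\sup_{\epsilon}\bigl|\int_{|x-y|>\epsilon}K(x,y)b(y)\,dy\bigr|$ are uniformly bounded. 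Neither lemma bounds a genuine principal value Calder\'on--Zygmund operator on $C^\gamma$, and no bilipschitz change of variables manufactures the missing cancellation. The paper's resolution, which your argument needs, is to recognize this piece as an exact derivative in $b$, namely $-\partial_{b_j}M(a,b)$ with $M(a,b)=|Z(a)-Z(b)|^{2-d}$, insert a cutoff $\chi$ and integrate by parts: this yields $U_1f(a)=\int\partial_{b_j}M(a,b)\,(f(b)-f(a))\,\chi(b)\,db$, a Lemma~\ref{lem3.2} commutator, plus $f(a)\,U_2(a)$ with $U_2(a)=-\int M(a,b)\,\partial_{b_j}\chi(b)\,db$, which is weakly singular and falls under Lemma~\ref{lem3.1}.

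A second missing ingredient is the verification of \eqref{bKb} itself, which is required even for the commutator piece you do identify. The kernel $K(a,b)$ is not odd in $b-a$; the uniform bound on $\int_{\epsilon<|b-a|<6r_0}K(a,b)\,db$ comes from the symmetrization (``pseudo-oddness'') identity based on $|Z(a+\xi)+Z(a-\xi)-2Z(a)|\le C|\xi|^{1+\gamma}$, which is precisely where the $C^{1+\gamma}$ regularity of $X$ enters quantitatively and where several of the powers of $\mu(X)$ and $\|X\|_{1+\gamma}$ in \eqref{FX} and \eqref{DiF} are generated. You allude to ``cancellation transferred back through the bilipschitz change of variables'' but never supply this computation; without it neither the hypothesis of Lemma~\ref{lem3.2} nor any bound on the non-cancellative piece is available. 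The same second-difference estimate is also what justifies the vanishing of the boundary term in \eqref{intparts2}, i.e.\ the validity of the principal-value formula for $\partial_j Tf$, a smaller omission of the same nature.
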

Here by $\|DF(X)\| $  we understand the norm of $DF(X)$ as a linear
mapping from $C^{1+\gamma}(\partial D_0,
 \Rd)$ into itself. As the reader will realize, the precise form
 of the constant in the right hand side of \eqref{FX} is
 crucial.

We show now that Theorem \ref{theo2.2} implies Theorem
\ref{theo2.1}. The only point that
 requires further discussion is the size of the interval
 $(-t_0,t_0)$ on which the solution exists. We need to find a ball
$B(I,\rho) \subset \Omega,$  of center the identity and radius
$\rho,$ so that $F$ is
 Lipschitz in $B(I,\rho)$ and we have an explicit bound for $F$ on $B(I,\rho).$
Lemma \ref{lem7.4} gives for $r_0 =r(D_0)$ the inequality
$r_0^{-\gamma} \le 2 q(D_0).$ Take
$$\rho = \frac{1}{2(1+2^{1+\frac{1}{\gamma}} q(D_0)^{\frac{1}{\gamma}})}$$
so that if $X \in B(I,\rho)$ then, by \eqref{lipX},
\begin{equation*}\label{XinOmega}
\begin{split}
|X(\alpha)-X(\beta)| & \geq
|\alpha-\beta|-|X(\alpha)-\alpha-(X(\beta)-\beta)| \\ & \geq
|\alpha-\beta|\, \left(1- (1+\frac{2}{r(D_0)})
\|X-I\|_{1+\gamma}\right) \\& \geq \frac{|\alpha-\beta|}{2},
\end{split}
\end{equation*}
that is, $\mu(X)\le 2, \; X \in B(I,\rho).$

Clearly $\|X\|_{1+\gamma} \le \|I\|_{1+\gamma}+\rho $ for $ X \in
B(I,\rho).$  By \eqref{DiF} $F$ is Lipschitz in $B(I,\rho)$ and by
\eqref{FX} $\|X\|_{1+\gamma} \le C_0\,2^{3d+2}\, \left(1+
(\|I\|_{1+\gamma}+\rho)^{2d+4}\right)$ for $ X \in B(I,\rho).$
Therefore the solution of \eqref{eqFX} exists in an interval
$(-t_0,t_0)$ with
$$t_0 \ge \frac{\rho}{C_0
\,2^{3d+2}\,\left(1+ (\|I\|_{1+\gamma}+\rho)^{2d+4}\right)},$$ which
is a quantity depending only on on $d, q(D_0), \sigma(D_0)$ and
$\operatorname{diam}(D_0).$

\section{Two Lemmas}\label{sec3}

To prove Theorem~\ref{theo2.2} we need two elementary lemmas on
integral operators acting on H\"older functions. These lemmas are
well known but we prove them  for the sake of the reader and because
we need the precise dependence on various constants.

\begin{lemma}\label{lem3.1}
Let $E$ be a measurable subset of $\R^{d-1}$ and assume that $K : E
\times E \rightarrow \R$ is a measurable function on $E \times E$
which satisfies
\begin{align}\label{eq3.3}
|K(x,y)| &\leq \frac{A} {|x-y|^{d-1-\gamma}},\\
\label{eq3.4} |K(x_1,y) - K(x_2,y)| & \leq |x_1 - x_2| \, \frac{A }
{|x_1-y|^{d -\gamma}}, \quad |x_1 - x_2| \leq |x_1 -y|/2.
\end{align}
Then for a constant $C$, which depends only on $d$  and $\gamma$,
\begin{equation}\label{eq3.11}
\left|\left|\int_{E} K(x,y) f(y)\, dy\right|\right|_{\infty, E} \leq
C A  \operatorname{diam}(E)^{\gamma} ||f||_{\infty,E}
\end{equation}
and
\begin{equation}\label{eq3.12}
\left|\left|\int_{E} K(x,y) f(y)\, dy\right|\right|_{\gamma,E} \leq
C A ||f||_{\infty,E}.
\end{equation}
\end{lemma}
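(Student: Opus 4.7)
The plan is to prove the two estimates by a standard splitting argument, exploiting that in ambient dimension $d-1$ the kernel size bound integrates to $\diam(E)^\gamma$ over a ball and that the smoothness bound integrates to give the Hölder increment.

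For the uniform estimate \eqref{eq3.11}, I fix $x \in E$ and estimate
\[
\Bigl|\int_E K(x,y) f(y)\,dy\Bigr| \le A \|f\|_{\infty,E} \int_{E} \frac{dy}{|x-y|^{d-1-\gamma}}
\le A \|f\|_{\infty,E} \int_{|x-y|\le \operatorname{diam}(E)} \frac{dy}{|x-y|^{d-1-\gamma}},
\]
and the last integral, taken over a ball in $\R^{d-1}$, equals a dimensional constant times $\operatorname{diam}(E)^\gamma$ because $\gamma > 0$ makes the radial integrand $r^{\gamma-1}$ integrable near $0$.

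For the Hölder estimate \eqref{eq3.12} I take distinct $x_1, x_2 \in E$, set $\delta = |x_1 - x_2|$, and split the difference
\[
\int_E \bigl(K(x_1,y) - K(x_2,y)\bigr) f(y)\,dy
\]
into a near piece over $\{y \in E : |x_1-y| < 2\delta\}$ and a far piece over $\{y \in E : |x_1-y| \ge 2\delta\}$. On the near piece I drop the difference structure and bound $|K(x_1,y)|$ and $|K(x_2,y)|$ by \eqref{eq3.3}; since on this region $|x_2 - y| \le 3\delta$ as well, both contributions integrate against $|x - y|^{-(d-1-\gamma)}$ over a ball of radius at most $3\delta$, giving $C A \delta^\gamma \|f\|_{\infty,E}$. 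On the far piece the hypothesis $|x_1 - x_2| \le |x_1-y|/2$ of \eqref{eq3.4} is satisfied, so
\[
|K(x_1,y) - K(x_2,y)| \le \frac{A\,\delta}{|x_1 - y|^{d-\gamma}},
\]
and the radial integral $\int_{2\delta}^{\operatorname{diam}(E)} r^{-(d-\gamma)} r^{d-2}\,dr = \int_{2\delta}^{\operatorname{diam}(E)} r^{\gamma - 2}\,dr$ is bounded by $C \delta^{\gamma-1}$ because $\gamma < 1$ makes $r^{\gamma-2}$ integrable at infinity. Multiplying by $\delta$ yields the same order $C A \delta^\gamma \|f\|_{\infty,E}$.

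Adding the two contributions gives $|u(x_1) - u(x_2)| \le C A \|f\|_{\infty,E}\, |x_1 - x_2|^\gamma$, which is \eqref{eq3.12}. There is no genuine obstacle here: the only points requiring attention are making sure that $|x_2 - y| \ge |x_1 - y|/2$ on the far region (immediate from the triangle inequality) and keeping track of the constants so they depend only on $d$ and $\gamma$, as both radial integrals produce factors of the form $1/\gamma$ and $1/(1-\gamma)$ that are absorbed into $C$.
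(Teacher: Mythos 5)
Your proof is correct and follows essentially the same route as the paper's: the same size-bound integration over a ball of radius $\operatorname{diam}(E)$ for the uniform estimate, and the same near/far splitting at distance comparable to $|x_1-x_2|$ for the H\"older estimate, with the size bound \eqref{eq3.3} on the near piece and the smoothness bound \eqref{eq3.4} on the far piece. The only cosmetic difference is that the paper sets $\delta = 2|x_1-x_2|$ and splits at $|x_1-y|=\delta$ while you split at $|x_1-y|=2|x_1-x_2|$, which is the same threshold.
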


\begin{proof} Let $D = \operatorname{diam}(E)$ be the diameter
of $E.$ We assume that $\operatorname{diam}(E)< \infty,$ otherwise
\eqref{eq3.11} is trivially satisfied. If $x \in E,$ then
\begin{equation*}\label{kinf}
\left| \int_{E} K(x,y) f(y)\, dy \right| \le A ||f||_{\infty,E}
\int_{B(x,D)} \frac{dy}{|x-y|^{d-1-\gamma}} \le C A ||f||_{\infty,E}
\, D^{\gamma},
\end{equation*}
which is \eqref{eq3.11}. For \eqref{eq3.12} take two points $x_1,
x_2 \in E$ and set $\delta = 2 |x_1 -x_2|.$ Then
\begin{equation*}\label{kgamma}
\begin{split}
\left| \int_{E} \left(K(x_1,y)- K(x_2,y)\right) f(y)\, dy \right| &
\le  \int_{E \cap B(x_1, \delta)} \left| K(x_1,y) \right| |f(y)|\,dy
+ \int_{E \cap B(x_1, \delta)} \left| K(x_2,y) \right| |f(y)|\,dy
\\ & + \int_{E\setminus B(x_1,
\delta)} \left| K(x_1,y)-K(x_2,y) \right| |f(y)|\,dy \\& =
I_1+I_2+I_3.
\end{split}
\end{equation*}
The first term is estimated as before :
\begin{equation*}\label{kI1}
I_1 \le  C A ||f||_{\infty,E} \,\delta^{\gamma} = C A
||f||_{\infty,E}\, |x_1-x_2|^{\gamma}.
\end{equation*}
The term $I_2$ can be treated as $I_1$ because  $ E \cap B(x_1,
\delta) \subset B(x_2, \frac{3}{2} \delta).$  For $I_3,$ by
\eqref{eq3.4},
\begin{equation*}\label{kI3}
I_3 \le  C A ||f||_{\infty,E} \,|x_1-x_2|  \int_{\R^{d-1} \setminus
B(x_1,\delta)} \frac{dy}{|x_1-y|^{d-\gamma}} \le C A
||f||_{\infty,E}\, |x_1-x_2|^{\gamma}.
\end{equation*}
\end{proof}

\begin{lemma}\label{lem3.2}
Let $E$ be a measurable subset of $\R^{d-1}$ and let $K : E \times E
\rightarrow \R$ be a measurable function on $E \times E$ which
satisfies
\begin{align}\label{eq3.3bis}
|K(x,y)| &\leq \frac{A} {|x-y|^{d-1}},\\
\label{eq3.4bis} |K(x_1,y) - K(x_2,y)| & \leq |x_1 - x_2|\, \frac{A
} {|x_1-y|^{d}}, \quad |x_1 - x_2| \leq |x_1 -y|/2.
\end{align}
Assume that $b$ is a compactly supported bounded measurable function
on $E$ such that for a constant $B$ one has
\begin{equation}\label{bKb}
\|b\|_{\infty,E}+ \sup_{\epsilon > 0} \left| \int_{\{y \in E :
|x-y|>\epsilon \}} K(x,y) b(y)\,dy \right| \le B, \quad x \in E .
\end{equation}
Let $f$ be a function on $E$ satisfying a H\"older condition of
order $\gamma.$ Then
\begin{equation}\label{Kbuni}
\left\| \int_{E} K(x,y) \left(f(x)-f(y)\right) b(y)\, dy
\right\|_{\infty,E} \leq C A B \operatorname{diam}(E)^{\gamma}
||f||_{\gamma,E}
\end{equation}
and
\begin{equation}\label{Kbgam}
\left\| \int_{E} K(x,y)\left(f(x)-f(y)\right) b(y)\,dy
\right\|_{\gamma,E} \le C A B \|f\|_{\gamma,E},
\end{equation}
for some constant $C$ depending only on $d$ and $\gamma.$
\end{lemma}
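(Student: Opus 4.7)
The plan is to treat $K(x,y)(f(x)-f(y))b(y)$ as a kernel whose singularity is reduced by the factor $|x-y|^\gamma$ coming from the H\"older condition on $f$, while reserving the truncated singular integral hypothesis \eqref{bKb} for the one place where a genuine Calder\'on--Zygmund singularity appears. The uniform bound \eqref{Kbuni} follows directly from this viewpoint: by \eqref{eq3.3bis} and $|f(x)-f(y)|\le \|f\|_{\gamma,E}|x-y|^{\gamma}$, the pointwise estimate $|K(x,y)(f(x)-f(y))|\le A\|f\|_{\gamma,E}|x-y|^{-(d-1-\gamma)}$, together with $\|b\|_{\infty,E}\le B$, reduces the matter to the same radial integration as in the first part of Lemma \ref{lem3.1}, yielding a bound $CAB\,\diam(E)^\gamma\|f\|_{\gamma,E}$.

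For \eqref{Kbgam} I would fix $x_1,x_2\in E$, set $\delta=2|x_1-x_2|$, and split $E$ into the near piece $E\cap B(x_1,\delta)$ and the far piece $E\setminus B(x_1,\delta)$. On the near piece, handling the $x_1$ and $x_2$ contributions separately and using $|x_2-y|\le 3\delta/2$ on that region, the pointwise bound above gives each of them $\le CAB\|f\|_{\gamma,E}\delta^\gamma$, which is of the desired form. On the far piece I would invoke the algebraic identity
\begin{equation*}
K(x_1,y)(f(x_1)-f(y))-K(x_2,y)(f(x_2)-f(y))=[K(x_1,y)-K(x_2,y)](f(x_1)-f(y))+K(x_2,y)[f(x_1)-f(x_2)].
\end{equation*}
For the first summand, on $|x_1-y|\ge\delta$ the condition $|x_1-x_2|\le|x_1-y|/2$ holds, so \eqref{eq3.4bis} combined with $|f(x_1)-f(y)|\le\|f\|_{\gamma,E}|x_1-y|^\gamma$ integrated against $\|b\|_{\infty,E}\le B$ produces a $|x_1-y|^{-(d-\gamma)}$ tail whose integral is $O(\delta^{\gamma-1})$; multiplying by $|x_1-x_2|$ yields $CAB\|f\|_{\gamma,E}|x_1-x_2|^\gamma$.

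The remaining term equals $[f(x_1)-f(x_2)]\int_{E\setminus B(x_1,\delta)}K(x_2,y)b(y)\,dy$, and this is the one place where the cancellation hypothesis \eqref{bKb} is essential. To use it I would rewrite
\begin{equation*}
\int_{E\setminus B(x_1,\delta)}K(x_2,y)b(y)\,dy=\int_{E\setminus B(x_2,3\delta/2)}K(x_2,y)b(y)\,dy+\int_{E\cap(B(x_2,3\delta/2)\setminus B(x_1,\delta))}K(x_2,y)b(y)\,dy.
\end{equation*}
The first summand is bounded by $B$ by \eqref{bKb} applied at $x=x_2$ with $\epsilon=3\delta/2$, while on the annular region of the second one has $\delta/2\le|x_2-y|\le 3\delta/2$, so \eqref{eq3.3bis} together with $\|b\|_{\infty,E}\le B$ yields a $\log 3$ contribution, bounded by $CAB$. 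Multiplying by $|f(x_1)-f(x_2)|\le\|f\|_{\gamma,E}|x_1-x_2|^\gamma$ closes the estimate. The only real subtlety, and the obstacle I anticipate, is precisely this annular correction: \eqref{bKb} controls only symmetric truncations about a single base point, so the mismatch between balls centered at $x_1$ and $x_2$ must be absorbed separately, and one must check that the size estimate \eqref{eq3.3bis} is exactly sharp enough to keep this discrepancy of size $O(1)$ independently of $\delta$.
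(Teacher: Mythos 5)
Your proof is correct and follows essentially the same route as the paper: the same near/far splitting at scale $\delta=2|x_1-x_2|$, the same product-rule identity on the far piece, and the same division of labour between the size and regularity bounds \eqref{eq3.3bis}--\eqref{eq3.4bis} and the truncation hypothesis \eqref{bKb}. The only deviation is your choice to pair $f(x_1)-f(x_2)$ with $K(x_2,y)$ rather than with $K(x_1,y)$: the paper's pairing makes the remaining integral a symmetric truncation about $x_1$, so \eqref{bKb} applies verbatim with $\epsilon=\delta$ and no annular correction is needed (at the small price of comparing $|y-x_1|$ with $|y-x_2|$ in the kernel-difference term, which your arrangement avoids). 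Your annulus estimate is handled correctly, so either arrangement closes the argument.
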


\begin{proof}
The inequality \eqref{Kbuni} is proved as in the previous lemma. Let
us deal with \eqref{Kbgam}. Given $x_1, x_2 \in E$ set $\delta = 2
|x_1-x_2|.$ Then
\begin{equation*}\label{Kbd}
\begin{split}
& \; \left| \int_{E} K(x_1,y)\left(f(x_1)-f(y)\right) b(y)
\,dy-\int_{E} K(x_2,y)\left(f(x_2)-f(y)\right) b(y) \,dy \right| \\&
\le \int_{E \cap B(x_1,\delta)} |K(x_1,y)| \left|f(x_1)-f(y)\right|
|b(y)| \,dy +\int_{E \cap B(x_1,\delta)} |K(x_2,y)|
\left|f(x_2)-f(y)\right| |b(y)| \,dy \\& + \left| \int_{E \setminus
B(x_1,\delta)} K(x_1,y)\left(f(x_1)-f(y)\right) b(y) \,dy-\int_{E}
K(x_2,y)\left(f(x_2)-f(y)\right) b(y) \,dy \right| \\& =
I_1+I_2+I_3.
\end{split}
\end{equation*}
The first term can be estimated readily by \eqref{eq3.3bis}:
\begin{equation*}\label{Kb1}
I_1 \le C A\,B \,\|f\|_{\gamma,E} \int_{B(x_1,\delta)}
\frac{dy}{|x-y|^{d-1-\gamma}}\,dy  \le C A\,B\,
\|f\|_{\gamma,E}\,|x_1-x_2|^{\gamma}
\end{equation*}
for some constant $C$ depending only on $d$ and $\gamma.$ For $I_2$
one only needs to observe that $E \cap B(x_1,\delta) \subset E \cap
 B(x_2, \frac{3}{2}\delta).$ For $I_3$ we have, by \eqref{eq3.3bis}, \eqref{eq3.4bis} and
 \eqref{bKb},
\begin{equation*}\label{Kbd1}
\begin{split}
I_3 & \le \left| (f(x_1)-f(x_2)) \int_{\{y \in E : |y-x_1|>\delta
\}} K(x_1,y)  b(y) \,dy \right| \\& +  \int_{\{y \in E :
|y-x_1|>\delta \}} \left|f(x_2)-f(y)\right|
\left|K(x_1,y)-K(x_2,y)\right| |b(y)| \,dy
\\& \le B \|f\|_{\gamma,E}\,|x_1-x_2|^{\gamma} +  C A B
\|f\|_{\gamma,E}\,|x_1-x_2| \int_{\{y \in \R^{d-1} : |x_2-y|>
\frac{\delta}{2} \}} \frac{dy}{|x_2-y|^{d-\gamma}} \\ & \le
 C A  B \|f\|_{\gamma,E}\,|x_1-x_2|^{\gamma}.
\end{split}
\end{equation*}
In the second inequality we applied \eqref{eq3.4bis} and then that
$|y-x_1| \ge \frac{2}{3} |y-x_2|$ and $|y-x_2| \ge \frac{\delta}{2}$
\, for $y \in \R^{d-1} \setminus B(x_1,\delta).$
\end{proof}

\section{Proof of Theorem \ref{theo2.2}, part (a)}\label{sec4}

For convenience of notation we assume $d \geq 3.$  The case $d=2$
has a similar proof and can also be obtained, in the simply
connected case, from the argument in \cite[Chapter 8]{MB} when the
tangential derivative $z_{\alpha}(\alpha',t)$ is replaced by the
normal derivative $i z_{\alpha}(\alpha',t).$

Let $X \in \Omega$ and set $\mu= \mu(X). $ By \eqref{defFX}
\begin{equation}\label{FXG}
F(X)(\alpha)= \int_{\partial D_{0}} N(X(\alpha) - X (\beta))\,
 \vec{G}(\beta)\,d\sigma (\beta),  \quad
\alpha \in \partial D_0,
\end{equation}
 where $\vec{G}: \partial D_0 \rightarrow \Rd$ satisfies
\begin{equation*}\label{ge}
\|\vec{G}\|_{\gamma,\partial D_0} \le C_0\|X\|^{d-1}_{1+\gamma}
\end{equation*}
Since $\partial D_0$ is a compact $C^{1 + \gamma}$ surface there
exists $r_0 > 0$ such that for each $\alpha_0 \in \partial D_0$ the
part of $\partial D_0$ lying in the cylinder $C(\alpha_0, 6r_0)$ is
the graph $\alpha_d = \varphi(\alpha')$ of a function $\varphi \in
C^{1+\gamma}(B(\alpha_0', 6r_0)),$  after possibly a rotation around
$\alpha_0.$  We show in Lemma \ref{lem7.4} below that we can take
\begin{equation}\label{r0}
(6 r_0)^{-\gamma} = 2 q(D_0)
\end{equation}
and that one has
\begin{equation*}\label{gradphi}
 \|\nabla \varphi\|_{\gamma, \,B(\alpha_0, 6 r_0)} \le 2 q(D_0).
\end{equation*}
Let $\psi \in C^\infty_0(B(\alpha_0,3r_0))$ such that $0 \le \psi
\le 1,$ $\psi =1$ on $B(\alpha_0,2r_0))$ and $ |\nabla \psi | \le A
/ r_0,$ $A$ a numerical constant. Set $F(X)(\alpha)=F_1(\alpha)+
F_2(\alpha), \; \alpha \in \partial D_0,$ with
\begin{equation}\label{F1}
F_1(\alpha) = \int_{\partial D_0} N(X(\alpha)-X(\beta))\,
\vec{G}(\beta) \,\psi(\beta)\,d\sigma(\beta), \quad \alpha \in
\partial D_0.
\end{equation}
Then $F_1$ is the local part of the integral in \eqref{FXG} and
$F_2$ the far away part. For $\alpha \in
\partial D_0 \cap C(\alpha_0, 6 r_0)$ set, to simplify the writing,
$a=(a_1,...,a_{d-1})= \alpha',$ so that $(a,\varphi(a)) \in \partial
D_0$ for $ a \in B(a_0, 6r_0).$  Define
$$
\tilde{F}_j(a)= F_j(a,\varphi(a)), \quad a \in B(a_0, 6r_0), \quad
j=1,2.
$$
The function  $\tilde{F}_2(a)$ is of class $C^\infty$ in $B(a_0,2
r_0)$ and it is easily estimated in $C^{1+\gamma}(B(a_0,r_0))$ by
taking gradient twice. The result is
\begin{equation*}\label{F2}
\|\tilde{F}_2\|_{1+\gamma,\,B(a_0,r_0)} \le
C_0\,\mu(X)^{d}\,(1+\|X\|^d_{1+\gamma}).
\end{equation*}
The constant $C_0$ in the preceding inequality contains explicitly
the area of the surface $\sigma(\partial D_0)$ and negative powers
of $r_0,$ which can be estimated in terms of $q(D_0)$ by virtue of
\eqref{r0}.

We turn now our attention to the more challenging term
$\tilde{F}_1(a).$ To simplify notation set
\begin{equation}\label{Z}
Z(a)= X(a,\varphi(a)), \quad a \in B(a_0, 6r_0),
\end{equation}
so that
\begin{equation}\label{Zbi}
\frac{1}{\mu(X)}\,|a-b| \le |Z(a)-Z(b)| \le C_0 \|X\|_{1+\gamma}\,
|a-b|, \quad a \in B(a_0, 6r_0).
\end{equation}
Define
$$
M(a,b)= \frac{1}{|Z(a)-Z(b)|^{d-2}}, \quad a, b \in B(a_0, 6r_0).
$$
 An estimate of the norm of $\tilde{F}_1(a)$ in $C^{1+\gamma}(B(a_0,
3 r_0))$ is equivalent to an estimate in this space of the function
$$
Tf(a)= \int M(a,b) f(b)\,db
$$
where
\begin{equation*}\label{ef}
f(b)= \vec{G}(b,\varphi(b)) \psi(b,\varphi(b))\,(1+|\nabla
\varphi(b)|^2)^{1/2}
\end{equation*}
 is in  $ C^{\gamma}(B(a_0, 3 r_0)),$ has compact support in
$B(a_0, 3 r_0),$ and satisfies
\begin{equation}\label{eef}
\|f\|_{\gamma, \,B(a_0, 3r_0)} \le C_0 \, \|X\|_{1+\gamma}^{d-1}.
\end{equation}
Passing to components we can assume that $f$ takes real values. Our
first task is to compute the distributional derivatives of $Tf.$ In
view of the singularity of the kernel and the dimension of the
space, which is $d-1,$ we expect a singular integral of
Calder\'{o}n-Zygmund type to appear. That this is indeed the case is
shown by the formula
\begin{equation}\label{derT}
\partial_j Tf(a)= \operatorname{p.v.} \int \frac{\partial}{\partial a_j} M(a,b) f(b)
\,db, \quad 1 \le j \le d-1,
\end{equation}
involving a principal value integral. The only difficulty in proving
the above identity is to ascertain that the boundary term appearing
in the integration by parts vanishes.  If $g \in
C_0^\infty(B(a,r_0))$ then
\begin{equation*}\label{intparts}
-\int Tf(a) \,\partial_j g(a)\,da =-\lim_{\varepsilon \downarrow 0}
\int \left(\int_{|a-b|>\varepsilon} M(a,b)\, \partial_j
g(a)\,da\right)f(b)\,db.
\end{equation*}
Fix $b$ and integrate by parts to get
\begin{equation}\label{intparts2}
-\int_{|a-b|>\varepsilon} M(a,b) \,\partial_j g(a)\,da =
\int_{|a-b|>\varepsilon}\frac{\partial}{\partial a_j} M(a,b)\,
g(a)\,da +\int_{|a-b|=\varepsilon} g(a)\,M(a,b)\,n_j(a)\,d\sigma(a),
\end{equation}
where $n_j(a)= (a_j-b_j)/|a-b|.$  To handle the boundary term
in~\eqref{intparts2},  note first that since $M(a,b)
=O(|a-b|^{2-d})$ and $\sigma(\partial B(b,\epsilon))=
O\bigl({\epsilon}^{d-2}\bigr)$, we have
$$
\lim_{\eps \downarrow 0}\int_{|a-b| = \eps} g(a) M(a,b) n_j(a)
\,d\sigma(a) = g(b) \lim_{\eps \downarrow 0}\int_{|a-b| = \epsilon}
 M(a,b) n_j(a) \,d\sigma(a).
$$
We now exploit the fact that $n_j$ is an odd function of $\xi= a-b$
to get
\begin{equation*}\label{boundaryterm}
\begin{split}
\int_{|b-a| =\varepsilon} M(a,b)
n_j(a)\,d\sigma(a)&=\int_{|\xi|=\varepsilon} M(b+\xi,b)
\frac{\xi_j}{|\xi|}\,d\sigma(\xi)\\*[7pt] & =\frac{1}{2}
\int_{|\xi|=\varepsilon}
\Bigl(M(b+\xi,b)-M(b-\xi,b)\Bigr)\frac{\xi_j}{|\xi|}\,d\sigma (\xi)
\end{split}
\end{equation*}
 An elementary calculation gives
\begin{equation*}\label{bterm}
\Bigl|M(b+\xi,b)-M(b-\xi,b)\Bigr|\le \frac{C}{|\xi|^{d-1}}
|Z(b+\xi)+Z(b-\xi)-2Z(b)|\le \frac{C}{|\xi|^{d-2-\gamma}}
\end{equation*}
which yields
\begin{equation*}\label{bterm2}
\int_{|a-b| = \eps}
 M(a,b) n_j(a) \,d\sigma(a) = O\bigl({\epsilon}^{\gamma}\bigr),
\end{equation*}
 and, consequently, shows \eqref{derT}.

We now prove that the principal value operator in \eqref{derT} maps
boundedly $ C^{\gamma}(B(a_0, 3 r_0))$ into itself. The strategy is
as follows : if there were the derivative with respect to $b_j$ in
the kernel of the operator in \eqref{derT} we could try an
integration by parts. We show that a sort of commutator changing the
derivative with respect to $a_j$ into one with respect to $b_j$ has
a kernel with extra smoothness and thus the corresponding commutator
operator satisfies the $C^\gamma$ estimate we are looking for.
Set 
\begin{equation*}\label{commu1}
C(a,b)= (2-d) \frac{(Z(a)-Z(b))\cdot
(\frac{\partial}{\partial a_j}Z(a)-\frac{\partial}{\partial b_j}Z(b))} {|Z(a)-Z(b)|^d}.
\end{equation*}

We then have
\begin{equation*}\label{commu}
\begin{split}
\frac{\partial}{\partial a_j} M(a,b) &=(2-d) \frac{(Z(a)-Z(b))\cdot
\frac{\partial}{\partial a_j}Z(a)} {|Z(a)-Z(b)|^d}\\*[7pt]
&=C(a,b)+(2-d) \frac{(Z(a)-Z(b))\cdot \frac{\partial}{\partial
b_j}Z(b)} {|Z(a)-Z(b)|^d}\\*[7pt] &=C(a,b) -\frac{\partial}{\partial
b_j} M(a,b)
\end{split}
\end{equation*}
To show
that the operator
\begin{equation}\label{opcommu}
Cf(a) = \int C(a,b)\,f(b)\,db
\end{equation}
is bounded on $C^\gamma(B(a_0,3r_0))$ we appeal to lemma
\ref{lem3.2}. Remark that
\begin{equation*}\label{kercom}
C(a,b)=K(a,b)\cdot (\frac{\partial Z}{\partial
a_j}(a)-\frac{\partial Z}{\partial b_j}(b))
\end{equation*} with $K(a,b)$ a kernel which satisfies the hypothesis
\eqref{eq3.3} and \eqref{eq3.4} of lemma \ref{lem3.2} with constant
$A=C_0 \|X\|^3_{1+\gamma}\, \mu(X)^{d+2}.$ To apply lemma
\ref{lem3.2} we need to check that
$$
 \left| \int_{|b-a| >\epsilon} K(a,b)\,f(b)\,db \right| \le C, \quad
 \epsilon >0, \quad a \in B(a_0, 3r_0).
$$
 For that write, for $a \in B(a_0,3 r_0),$
\begin{equation*}\label{K1}
\begin{split}
\int_{\varepsilon <|b-a|} K(a,b) f(b)\,db &=\int_{\varepsilon
<|b-a|<6r_0} K(a,b) (f(b)-f(a))\,db +f(a) \int_{\varepsilon <
|b-a|<6r_0} K(a,b)\,db\\*[7pt] &\equiv I(a)+II(a)
\end{split}
\end{equation*}
The term $I(a)$ is estimated straightforwardly by
$$
|I(a)| \le C_0\,\|X\|_{1+\gamma}^{d}\, \mu(X)^{d} \int_{|a-b| < 6
r_0} \frac{1}{|a-b|^{d-1-\gamma}}\,db =  C_0\,\|X\|_{1+\gamma}^d\,
\mu(X)^{d} .
$$
For $II(a)$ we use a ``pseudo-oddness" property of the kernel. We
have
\begin{equation*}\label{psodd}
\begin{split}
&\qquad\int_{\varepsilon<|a-b|<6r_0} \frac{Z(b)-Z(a)}{|Z(b)-Z(a)|^d}
\,db\\*[7pt] &= \int_{\varepsilon<| \xi|<6r_0}
\frac{Z(a+\xi)-Z(a)}{|Z(a+\xi)-Z(a)|^d} \,d\xi\\*[7pt] &=\frac{1}{2}
\int_{\varepsilon<|\xi|<6r_0}\left(
\frac{Z(a+\xi)-Z(a)}{|Z(a+\xi)-Z(a)|^d} + \frac{Z(a-\xi)-Z(a)}
{|Z(a-\xi)-Z(a)|^d}\right)\,d\xi\\*[7pt] &=\frac{1}{2}
\int_{\varepsilon<|\xi|<6r_0} \bigl(Z(a+\xi)-Z(a)\bigr)\left(
\frac{1}{|Z(a+\xi)-Z(a)|^d}
-\frac{1}{|Z(a-\xi)-Z(a)|^d}\right)\,d\xi\\*[7pt] &\quad +
\int_{\varepsilon<|\xi|<6r_0} \frac{Z(a+\xi)+Z(a-\xi)-2Z(a)}
{|Z(a-\xi)-Z(a)|^d}\,d\xi
\end{split}
\end{equation*}
The elementary inequality
\begin{equation*}\label{ztod}
\Big||z|^d-|w|^d \Big| \le d\,\sup_{0 \le j \le d-1}(|z|^{d-1-j},
|w|^j)\,|z \pm w|
\end{equation*}
 provides an estimate for the first term above and the second is estimated straightforwardly. We finally obtain
\begin{equation*}\label{IIa}
\begin{split}
|II(a)| & \le C_0\,\mu(X)^{2d}\,(1+\|X\|^{d+1}_{1+\gamma})\,
\|f\|_{\infty,B(a_0,3r_0)}\\& \le C_0\, \mu(X)^{2d} \, (1+\|X\|^{2
d}_{1+\gamma}).
\end{split}
\end{equation*}
The constant of the kernel $K(a,b),$ as in \eqref{eq3.3} and
\eqref{eq3.4}, is less than
$C_0\,\mu(X)^{d+2}\,\|X\|_{1+\gamma}^{3}.$ Therefore lemma
\ref{lem3.2} yields
\begin{equation}\label{Cop}
\|Cf\|_{\gamma, B(a_0,3 r_0)} \le  C_0\,\mu(X)^{3d+2} \,(1+\|X\|^{2
d+4}_{1+\gamma}).
\end{equation}
It remains to estimate  the operator
\begin{equation*}\label{dbj}
Uf(a)= \operatorname{p.v.} \int \frac{\partial}{\partial b_j} M(a,b)
f(b) \,db
\end{equation*}
on $ C^{\gamma}(B(a_0, 3 r_0)),$ for $1 \le j \le d-1.$  Take a
function $\chi \in C^\infty_0(B(a_0,4r_0))$ such that $0 \le \chi
\le 1,$ $\chi =1$ on $B(a_0,3 r_0)$ and $|\nabla \chi | \le C_0 /
r_0.$ Then $f = f \chi.$ We have
\begin{equation*}\label{U1U2}
\begin{split}
Uf(a)&=\int \frac{\partial}{\partial b_j} M(a,b)
(f(b)-f(a))\chi(b)\,db\\*[7pt] &\quad+f(a) \text{ p.v.} \int
\frac{\partial}{\partial b_j} M(a,b)\chi(b)\,db\\*[7pt]
&=U_1f(a)+f(a)U_2(a).
\end{split}
\end{equation*}
Integrating by parts and noticing that, as before,  the boundary
term vanishes we get
\begin{equation*}\label{U2}
U_2(a)= -\int  M(a,b)\, \frac{\partial}{\partial b_j} \chi(b) \,db,
\quad a \in B(a_0,3 r_0).
\end{equation*}
Thus, by \eqref{Z} and  $\|\nabla \chi\|_{\infty,\,B(a_0, 3r_0)} \le
\,C_0 r_0^{-1},$
\begin{equation}\label{U22}
|U_2(a)| \le C_0 \, \mu(X)^{d-2}, \quad a \in B(a_0,3 r_0),
\end{equation}
and by lemma \ref{lem3.1}
\begin{equation*}\label{U23}
\|U_2\|_{\gamma,\,B(a_0, 3r_0)} \le C_0 \,
\mu(X)^{d-1}\,\|X\|_{1+\gamma}^2.
\end{equation*}
Here the constant of the kernel $M(a,b)$ has been estimated by
$C_0\,\mu(X)^{d-1}\,\|X\|_{1+\gamma}^2.$ By \eqref{eef}
\begin{equation}\label{fU2}
\|f U_2\|_{\gamma,\,B(a_0, 3r_0)} \le C_0 \,
\mu(X)^{d-1}\,\|X\|_{1+\gamma}^{d+1}.
\end{equation}

For $U_1 f$ we apply lemma \ref{lem3.2}. The kernel of the operator
$U_1$ is $\partial /\partial b_j M(a,b)$, whose constant turns out
to be not greater than
$C_0\,\mu(X)^{d+2}\,(1+\|X\|_{1+\gamma}^{4}).$   Tacking into
account \eqref{eef} and \eqref{U22} lemma \ref{lem3.2} yields
\begin{equation}\label{U1}
\|U_1 f \|_{\gamma,\,B(a_0, 3r_0)} \le C_0 \, \mu(X)^{2 d}\,(1+
\|X\|_{1+\gamma}^{d+3}).
\end{equation}
Combining \eqref{fU2} and \eqref{U1}
\begin{equation}\label{eU}
\|U f \|_{\gamma,\,B(a_0, 3r_0)} \le C_0 \, \mu(X)^{2 d}\,(1+
\|X\|_{1+\gamma}^{d+3}).
\end{equation}
By \eqref{eU} and \eqref{Cop} we finally obtain
\begin{equation*}\label{djTf}
\|\partial_j Tf(a)\|_{\gamma,\,B(a_0, 3r_0)} \le  C_0\,\mu(X)^{3d+2}
\,(1+\|X\|^{2 d+4}_{1+\gamma}),
\end{equation*}
which completes the proof of \eqref{FX}.

\section{Proof of Theorem \ref{theo2.2}, part (b)}\label{sec5}

It is enough to show that for $X \in \Omega$ and $H \in
C^{1+\gamma}(\partial D_0, \Rd)$
$$
DF(X)(H)  = \frac{d} {d\lambda} F(X + \lambda H)\Bigr|_{\lambda =0}
$$
satisfies
\begin{equation}\label{DFXe}
||DF(X)(H)||_{1+\gamma}  \leq     C_0\,\mu(X)^{3d+8} \,(1+\|X\|^{3
d+7}_{1+\gamma})   \,||H||_{1+\gamma}.
\end{equation}
To prove this we may assume that $||H||_{1+\gamma} =  1.$   We first
compute
\begin{equation*}\label{DF}
\begin{split}
DF(X)(H)&=\frac{d}{d\lambda} F(X+\lambda
H)\Bigr|_{\lambda=0}\\*[7pt] &=\frac{d}{d\lambda}\Bigr|_{\lambda=0}
\int_{\partial D_0} N\Bigl((X+\lambda H)(\alpha)-(X+\lambda
H)(\beta)\Bigr) \bigwedge^{d-1}_{j=1} D(X+\lambda H)(T_j
(\beta))\,d\sigma(\beta)\\*[7pt] &=\int_{\partial D_0}
N(X(\alpha)-X(\beta))\sum^{d-1}_{j=1} (-1)^{j-1} DH(T_j(\beta))
\bigwedge_{k\ne j} DX(T_k(\beta))\,d\sigma(\beta)\\*[7pt] &\quad
+\int_{\partial D_0} \nabla N (X(\alpha)-X(\beta))\cdot
(H(\alpha)-H(\beta))\bigwedge_{j=1}^{d-1} D(X)
(T_j(\beta))\,d\sigma(\beta)\\*[7pt] &\equiv A(\alpha)+B(\alpha)
\end{split}
\end{equation*}
Consider first the term $A(\alpha).$ This is a sum of $d-1$ terms,
each of which looks like the function $F(X)(\alpha)$ in
\eqref{defFX}. The only difference is that in $A(\alpha)$ one of the
factors $DX(T_j)(\beta)$ has been replaced by a vector of the type
$DH(T_j)(\beta).$ Then the estimate of $A(\alpha)$ in
$C^{1+\gamma}(\partial D_0, \Rd)$ is performed in exactly the same
way as we did in the previous section for $F(X).$   There is only
one difference, namely, that in the bounding terms one of the
factors  $\|X\|_{1+\gamma}$ should be replaced by
$\|H\|_{1+\gamma}=1.$ Thus
$$
\|A\|_{1+\gamma} \le C_0\,\mu(X)^{3d+2}
\,(1+\|X\|^{2d+3}_{1+\gamma}) .
$$

The term $B(\alpha)$ is slightly different because of the presence
of the factor $H(\alpha)-H(\beta)$ in the kernel, which compensates
the higher singularity of $\nabla N(X(\alpha)-X(\beta)).$ The
structure of the argument is, however, the same. One performs the
splitting into local and far away parts, as in \eqref{F1}. The local
part, which is the most difficult, can be written in local
coordinates $a=(a_1,...,a_{d-1})$ as
\begin{equation*}\label{local}
Tf(a)= \int M(a,b) \,f(b)\,db
\end{equation*}
where $f$ is a scalar function satisfying the estimate \eqref{eef},
and the kernel $M(a,b)$ is given by
\begin{equation*}\label{localkernel}
M(a,b)= \nabla N(Z(a)-Z(b)){\cdot}(h(a)-h(b)),
\end{equation*}
$Z(a)= X(a,\varphi(a))$ and $h(a)=H(a,\varphi(a)).$  The function
$Z$   satisfies \eqref{Zbi} and $\|h\|_{1+\gamma, B(a_0,3r_0)} \le
C_0.$ As before, the boundary term vanishes and we have
\begin{equation*}\label{derT2}
\partial_j Tf(a)= \operatorname{p.v.} \int \frac{\partial}{\partial a_j} M(a,b) f(b)
\,db, \quad 1 \le j \le d-1.
\end{equation*}
We express this operator as a commutator minus an operator with
kernel $\partial_j /\partial b_j M(a,b).$  Recall that the
commutator gives the worst constants. The kernel of the commutator
is
\begin{equation*}\label{commu2}
\begin{split}
C(a,b) & = \nabla^2 N \big(Z(a)-Z(b)\big) \big(\partial_j Z(a)-
\partial_j Z(b)\big) {\cdot} (h(a)-h(b)) + \nabla N(Z(a)-Z(b)) {\cdot} (\partial_j
h(a)-\partial_j h(b))\\& = K(a,b)(\partial_j Z(a)- \partial_j Z(b))
 + \nabla N(Z(a)-Z(b)) {\cdot} (\partial_j h(a)- \partial_j h(b)),
\end{split}
\end{equation*}
where the second identity defines the matrix $K(a,b).$ The operator
given by the kernel in the second term
$$
\nabla N(Z(a)-Z(b)) {\cdot} (\partial_j h(a)- \partial_j h(b))
$$
is estimated as we did in the previous section for \eqref{opcommu}.
The worst constants appear in estimating the operator with the
kernel
$$
K(a,b)(\partial_j Z(a)- \partial_j Z(b)).
$$
We follow closely the argument for the estimate of \eqref{opcommu}.
The step that gives the largest constant happens when dealing with
the quantity
\begin{equation}\label{pseudo2}
f(a)\,\int_{\epsilon <| b-a|< 6r_0} K(a,b)\,db, \quad  a \in B(a_0,
3r_0).
\end{equation}
 The pseudo-oddness property of $K$ gives
$$
|\int_{\epsilon <| b-a|< 6r_0} K(a,b)\,db| \le C_0
\,\|h\|_{1+\gamma}\, \mu(X)^{2d+4}\, (1+\|X\|_{1+\gamma}^{d+4}),
\quad a \in B(a_0, 3r_0),
$$
which, combined with \eqref{eef}, yields the upper bound
$$
 C_0
\, \mu(X)^{2d+4}\, (1+\|X\|_{1+\gamma}^{2d+3})
$$
for the norm of the matrix in \eqref{pseudo2}.
 The constant of the
kernel $K(a,b)$ in applying lemma \ref{lem3.2} is $C_0 \,
\mu(X)^{d+4}\,\|X\|_{1+\gamma}^4.$  Thus lemma \ref{lem3.2} finally
gives
$$
\|Tf\|_{1+\gamma} \le C_0\,
\mu(X)^{3d+8}\,(1+\|X\|_{1+\gamma}^{2d+7})
$$
and
\begin{equation*}\label{DFXe2}
||DF(X)(H)||_{1+\gamma}  \leq     C_0\,\mu(X)^{3d+8} \,(1+\|X\|^{3
d+7}_{1+\gamma}),
\end{equation*}
which is \eqref{DFXe}, because we are assuming that
$||H||_{1+\gamma} = 1.$

\section{The logarithmic inequality for  \boldmath$\|\nabla v(\cdot,t)\|_\infty$}  \label{sec6}

Fix a bounded $C^{1 + \gamma}$ domain $D$ and write
\begin{equation}\label{eq7.1}
v(x) = -\nabla N * \newchi_{D}(x).
\end{equation}
In section we will establish a logarithmic estimate for $||\nabla v
||_{\infty}$ that will be needed to get long time solutions of the
problem \eqref{eq1.7}, \eqref{eq1.8} and \eqref{eq1.9}.

\begin{lemma}\label{lem7.1}
Let $x \notin  \partial D$, and let $\epsilon = \epsilon(x) =
\dist(x,\partial D).$ Then for $1 \leq j,k \leq d$, the vector $v =
(v^1, v^2, \dotsc, v^d)$ satisfies
\begin{equation}\label{eq7.2}
\frac{\partial v^j}  {\partial x_k}(x) =
 \left(\frac{d}{\omega_{d-1}}  \frac{x_jx_k}  {|x|^{d+2}} * \newchi_{D\setminus B(x,\epsilon)}\right)(x),\quad j \neq k
\end{equation}
and
\begin{equation}\label{eq7.3}
\frac{\partial v^j}  {\partial x_j}(x)= - \frac{1} {d}
\newchi_{D}(x) - \left(\frac{1}{\omega_{d-1}}\frac {|x|^2 - d \,x_j^2}  {|x|^{d+2}} *
\newchi_{D \setminus B(x,\epsilon)}\right)(x)
\end{equation}
where $\omega_{d-1} = \sigma(\S^{d-1})$ and the derivatives are
distributional derivatives.
\end{lemma}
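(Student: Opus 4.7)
Start from
$$v^j(x) = -(\partial_j N * \newchi_D)(x) = -\int_D \partial_j N(x-y)\,dy, \qquad \partial_j N(z) = \frac{z_j}{\omega_{d-1}|z|^d},$$
an absolutely convergent integral since $\partial_j N$ is locally integrable. The goal is to differentiate in $x_k$ and isolate two contributions: the principal-value-type integral of $\partial_k\partial_j N(x-y)$ on $D\setminus B(x,\epsilon)$ and the jump $-\tfrac{1}{d}\newchi_D(x)\,\delta_{jk}$, which will appear because when $x\in D$ the singularity $y=x$ sits inside the domain of integration. I would accordingly split the argument into the two cases $x\notin\bar D$ and $x\in D$.

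\textbf{Case $x\notin\bar D$.} Here $B(x,\epsilon)\cap D=\emptyset$, so $\partial_j N(x-y)$ is smooth jointly in $(x,y)$ on a neighborhood of $\{x\}\times D$, and classical differentiation under the integral sign yields
$$\partial_k v^j(x) = -\int_D \partial_k\partial_j N(x-y)\,dy = -\int_{D\setminus B(x,\epsilon)} \partial_k\partial_j N(x-y)\,dy.$$
Substituting the pointwise formulas
$$\partial_k\partial_j N(z) = -\frac{d\,z_j z_k}{\omega_{d-1}|z|^{d+2}}\quad(j\ne k),\qquad \partial_j^2 N(z) = \frac{|z|^2 - d\,z_j^2}{\omega_{d-1}|z|^{d+2}},$$
obtained by direct differentiation of $\partial_j N$, recovers \eqref{eq7.2} and \eqref{eq7.3}; the term $-\tfrac{1}{d}\newchi_D(x)$ is $0$ because $x\notin D$.

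\textbf{Case $x\in D$.} Now $B(x,\epsilon)\subset D$, and for $x'$ sufficiently close to $x$ one still has $B(x',\epsilon/2)\subset D$. Exploiting the odd symmetry $\partial_j N(-z) = -\partial_j N(z)$ on the symmetric ball,
$$\int_{B(x',\epsilon/2)} \partial_j N(x'-y)\,dy = \int_{B(0,\epsilon/2)} \partial_j N(z)\,dz = 0,$$
so I may rewrite $v^j(x') = -\int_{D\setminus B(x',\epsilon/2)} \partial_j N(x'-y)\,dy$: a nonsingular integrand over a domain that depends on $x'$. Differentiating via the Reynolds transport formula, the fixed piece $\partial D$ contributes nothing; on the moving sphere $\{|y-x'|=\epsilon/2\}$ the outward unit normal to $D\setminus B(x',\epsilon/2)$ is $-(y-x')/|y-x'|$, and the velocity of a boundary point as $x'$ moves in direction $e_k$ is $e_k$. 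This produces
$$\partial_k v^j(x) = -\int_{D\setminus B(x,\epsilon/2)} \partial_k\partial_j N(x-y)\,dy \;-\; \int_{|y-x|=\epsilon/2} \partial_j N(x-y)\,\frac{x_k-y_k}{\epsilon/2}\,d\sigma(y).$$
Parametrizing $y = x + (\epsilon/2)\theta$ with $\theta\in S^{d-1}$, the surface integral collapses, after all powers of $\epsilon/2$ cancel, to $\tfrac{1}{\omega_{d-1}}\int_{S^{d-1}}\theta_j\theta_k\,d\sigma(\theta) = \tfrac{1}{d}\delta_{jk}$, which supplies exactly the $-\tfrac{1}{d}\newchi_D(x)$ jump required by \eqref{eq7.3}. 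Finally, each kernel $\partial_k\partial_j N$ is mean-zero on every sphere centered at $x$, so the integral over the annulus $B(x,\epsilon)\setminus B(x,\epsilon/2)\subset D$ vanishes and $B(x,\epsilon/2)$ may be enlarged to $B(x,\epsilon)$; inserting the explicit expressions for $\partial_k\partial_j N$ completes \eqref{eq7.2} and \eqref{eq7.3}.

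\textbf{Main obstacle.} The only genuinely delicate step is the isolation of the jump term: one must regularize with a ball of radius $\epsilon/2$ rather than $\epsilon$ (so the excluded ball stays inside $D$ even for perturbed base points), apply the Reynolds formula with the correct sign on the inward-pointing normal, and verify that $\int_{S^{d-1}}\theta_j\theta_k\,d\sigma = \tfrac{\omega_{d-1}}{d}\delta_{jk}$ produces exactly the coefficient $1/d$ demanded by \eqref{eq7.3}; the subsequent enlargement from radius $\epsilon/2$ to $\epsilon$ is then legitimized by the mean-value cancellation of $\partial_k\partial_j N$ on spheres.
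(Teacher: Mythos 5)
Your proof is correct, but it travels a different road from the paper's. The paper obtains \eqref{eq7.2}--\eqref{eq7.3} by invoking Green's theorem to identify the distributional derivative $\partial_k v^j$ with the principal value integral $\lim_{\eta\to0}\int_{D\cap\{|y-x|>\eta\}}\partial_k\partial_jN(x-y)\,dy$ (plus, for $j=k$, the standard $-\tfrac1d\delta_{jk}\newchi_D$ coming from the distributional Hessian of $N$), and then kills the limit by the same annulus cancellation you use at the end, namely that $\partial_k\partial_jN$ has zero mean on every sphere so the integral over $\{\eta<|y-x|<\epsilon\}\subset D$ vanishes. You instead avoid quoting the distributional identity altogether: the odd-symmetry excision of $B(x',\epsilon/2)$ turns $v^j$ into a nonsingular integral over a moving domain, and the Reynolds transport formula produces both the truncated integral and, via the surface term $\tfrac1{\omega_{d-1}}\int_{\S^{d-1}}\theta_j\theta_k\,d\sigma=\tfrac1d\delta_{jk}$, the jump coefficient from first principles. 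What your route buys is a self-contained, classical computation of $\partial_kv^j$ at every $x\notin\partial D$ (in particular a transparent derivation of where the $-\tfrac1d$ comes from); what it costs is length, and one small loose end worth a sentence: since the lemma speaks of distributional derivatives, you should note that the classical derivative you compute is continuous on each of the open sets $D$ and $\R^d\setminus\overline D$, hence agrees there with the distributional derivative — the same identification the paper makes implicitly through Green's theorem. All the sign and normalization checks in your surface integral are right.
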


\begin{proof}
Suppose $j \neq k$.  By \eqref{eq7.1} and Green's theorem,
$$
\frac{\partial v^j}  {\partial x_k}(x) = \frac{d}{\omega_{d-1}}
\lim_{\eta \to 0} \int_{D\cap \{|y-x| > \eta\}} \frac{(x_j -
y_j)(x_k - y_k)} {|x-y|^{d+2}}\,dy,
$$
but for $0 < \eta < \epsilon(x)$
$$
\int_{\eta < |y-x| < \epsilon} \frac{(x_j - y_j)(x_k - y_k)}
{|x-y|^{d+2}}\, dy =0.
$$
That established \eqref{eq7.2}, and the proof of \eqref{eq7.3} is
similar.
\end{proof}

\bigskip
Notice that the principle value  kernels in  \eqref{eq7.2} and
\eqref{eq7.3}  have the form
\begin{equation}\label{eq7.4}
K(x) = \frac{\Omega(x)}  {|x|^{d}}, \quad x \neq 0,
\end{equation}
where
\begin{enumerate}
\item[(i)] $\Omega$ is homogeneous of degree $0$, $\Omega(x) = \Omega(\frac{x}  {|x|}),$
\item[(ii)] $\Omega$ is even, $\Omega(-x) = \Omega(x),$
\item[(iii)] $\Omega \in C^1(\R^d \setminus \{0\}),$
\end{enumerate}
and
\begin{enumerate}
\item[(iv)]  $\int_{\S^{d-1}} \Omega (x) \,d\sigma (x) =0.$
\end{enumerate}

As we mentioned before, a  $C^{1 + \gamma}$ domain $D$ has a
defining function, that is, a $C^{1 + \gamma}$ function $\Phi \colon
\R^d \rightarrow \R$ such that  and $D = \{\Phi < 0\},$ $\partial D
= \{\Phi = 0\}$ and $\nabla \Phi(x) \neq 0$, $x \in
\partial D. $  We set
$$
|\nabla \Phi|_{\rm {inf}} = \inf_{x \in \partial D}|\nabla \Phi(x)|
$$
and
$$
||\nabla \Phi||_{\gamma} = \sup_{x_1 \neq x_2 \in \R^d}
\frac{|\nabla \Phi(x_1) - \nabla \Phi(x_2)|}  {|x_1 -
x_2|^{\gamma}}.
$$

\begin{theorem}\label{theo7.2}
Let $K$ satisfy \eqref{eq7.4} and (i)--(iv)  and let $D$ be a $C^{1
+ \gamma}$ domain with defining function $\Phi.$ Then
\begin{equation}\label{eq7.5}
\left| \int_{ |y-x|> \epsilon} K(x-y) \newchi_D(y)\, dy \right| \leq
\frac{C_d\, C(\Omega)}  {\gamma} \, \left(1 + \log^{+}
\left(|D|^{1/d} \frac{||\nabla \Phi||_{\gamma}}  {|\nabla \Phi|_{\rm
{inf}}}\right)\right), \quad x \in \R^d, \quad 0 < \epsilon,
\end{equation}
where $C_d$ and $C(\Omega)$ are constants that depend only on $d$
and $\Omega$ respectively, and $\log^+ x = \max\{\log x, 0\}$ is the
positive part of the logarithm.
\end{theorem}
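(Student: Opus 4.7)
The plan is to use the classical half-space cancellation strategy (in the spirit of \cite{CheminPatch,BC}, and of \cite{MOV,MOV2,CG} cited in the introduction): at each $x$ I replace $\newchi_D$ locally by the indicator of a half-space through $x$ whose normal matches that of $\partial D$ at the nearest boundary point. The evenness and mean-zero properties of $K$ force this half-space indicator to integrate to zero against $K$ on every annulus centered at $x$; the $C^{1+\gamma}$ smoothness of $\partial D$ then controls the difference.

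Set $L = \|\nabla\Phi\|_\gamma/|\nabla\Phi|_{\rm inf}$ and $\delta = (2L)^{-1/\gamma}$. For any $x_0 \in \partial D$, $|\nabla\Phi(y)-\nabla\Phi(x_0)| \le |\nabla\Phi(x_0)|/2$ when $|y - x_0| \le \delta$, so on $B(x_0, 2\delta)$ the surface $\partial D$ is a $C^{1+\gamma}$ graph: in coordinates with $x_0 = 0$ and $\vec n := \nabla\Phi(x_0)/|\nabla\Phi(x_0)| = e_d$, $\partial D = \{y_d = \phi(y')\}$ with $\phi(0) = 0$, $\nabla\phi(0) = 0$, and $|\phi(y')| \le L|y'|^{1+\gamma}$. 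The first reduction uses the mean-zero identity $\int_{a < |z| < b} K(z)\,dz = 0$: if $d_0 := \dist(x, \partial D)$, then $\newchi_D$ is constant on $B(x, d_0)$, so the integral of $K(x-y)\newchi_D(y)$ over $\epsilon < |y-x| < \max(\epsilon, d_0)$ vanishes, and I may replace $\epsilon$ by $\epsilon' := \max(\epsilon, d_0)$.

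If $\epsilon' \ge \delta$, I bound directly: splitting at $R = |D|^{1/d}$, the annulus contributes at most $C_d\|\Omega\|_\infty \log(|D|^{1/d}/\delta)$ and the tail at most $\|\Omega\|_\infty |D|/R^d = \|\Omega\|_\infty$. If $\epsilon' < \delta$, let $x_0$ be a nearest boundary point and $H_x$ the half-space through $x$ with normal $\vec n$, with orientation chosen so $\newchi_{H_x}(x) = \newchi_D(x)$; then
\[
\int_{\epsilon' < |y-x| < \delta} K(x-y)\newchi_{H_x}(y)\,dy = 0
\]
by $K(-z)=K(z)$ together with the mean-zero property applied in the variable $z = y - x$ (a half-space through the origin integrates to half of the full annulus, which is zero). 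Writing $\newchi_D - \newchi_{H_x} = (\newchi_D - \newchi_H) + (\newchi_H - \newchi_{H_x})$, where $H = \{(y - x_0)\cdot\vec n < 0\}$ is the tangent half-space at $x_0$, the ``slab'' error $\newchi_H - \newchi_{H_x}$ is supported on a slab of thickness $d_0$ between two parallel hyperplanes; a polar-coordinate estimate giving solid-angle $\le C d_0/r$ at radius $r \in [\epsilon', \delta]$ (valid because $\epsilon' \ge d_0$) yields a contribution $\le C d_0/\epsilon' \le C$. The ``curvature'' error $\newchi_D - \newchi_H$ is supported on $\{|y_d| \le L|y'|^{1+\gamma}\}$, and using $|y-x| \ge |y'|$ it is bounded by $C\|\Omega\|_\infty L \int_0^{2\delta} r^{\gamma-1}\,dr = C\|\Omega\|_\infty L(2\delta)^\gamma/\gamma = C\|\Omega\|_\infty/\gamma$. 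The outer annulus $|y-x| \ge \delta$ is handled as in the first case. Since $\log(|D|^{1/d}/\delta) = \log|D|^{1/d} + (1/\gamma)\log(2L)$ and $\gamma \le 1$, these estimates combine into the bound $(C_d\,C(\Omega)/\gamma)(1 + \log^+(|D|^{1/d} L))$ claimed.

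The main obstacle is the slab estimate: without the reduction $\epsilon \mapsto \epsilon'$ one would pick up a spurious $\log(d_0/\epsilon)$ term not controlled by the right-hand side. The choice $\delta = (2L)^{-1/\gamma}$ is tuned so that $L\delta^\gamma \sim 1$, which produces the crucial $1/\gamma$ factor in the conclusion via $\int_0^\delta r^{\gamma-1}\,dr = \delta^\gamma/\gamma$, a direct manifestation of the Hölder regularity of $\nabla\Phi$.
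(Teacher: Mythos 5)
Your proof is correct and follows essentially the same route as the paper's: the same three--scale decomposition at the radii $\delta\sim\bigl(\|\nabla\Phi\|_\gamma/|\nabla\Phi|_{\inf}\bigr)^{-1/\gamma}$ and $|D|^{1/d}$, the same cancellation of the even, mean--zero kernel against a half--space (equivalently, against hemispheres in polar coordinates), and the same use of Lemma~\ref{lem7.4} to confine $D\triangle H$ to the region $|y_d|\lesssim L|y'|^{1+\gamma}$, which produces the $1/\gamma$ through $\int_0^{\delta}r^{\gamma-1}\,dr$. The only (equally valid) variation is your treatment of $x\notin\partial D$: after the reduction $\epsilon\mapsto\max(\epsilon,\dist(x,\partial D))$ you keep the base point at $x$ and absorb the discrepancy between the tangent half--space at the nearest boundary point $x_0$ and its translate through $x$ into a slab of thickness $\dist(x,\partial D)$, whereas the paper first replaces the truncated integral at $x$ by the one at $x_0$ with doubled truncation and controls the difference $\Delta_1+\Delta_2$ by a gradient estimate on $K$.
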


\begin{corollary}\label{coro7.3}
If $v(x)$ is defined by \eqref{eq7.1} and $\Phi$ is a defining
function for the bounded $C^{1 + \gamma}$ domain $D$, then
\begin{equation}\label{eq7.6}
||\nabla v||_{\infty} \leq \frac{C'_d}  {\gamma}  \left(1 + \log^+
\left(|D|^{1/d} \frac{||\nabla \Phi||_{\gamma}}  {|\nabla \Phi|_{\rm
{inf}}} \right)\right).
\end{equation}
\end{corollary}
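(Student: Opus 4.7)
The plan is to combine Lemma~\ref{lem7.1} with Theorem~\ref{theo7.2} directly. For $x \in \R^d\setminus\partial D$, set $\epsilon = \dist(x,\partial D)$. Lemma~\ref{lem7.1} writes each entry of $\nabla v(x)$ as $-\frac{1}{d}\delta_{jk}\chi_D(x)$ plus a truncated singular integral
\[
\int_{|y-x|>\epsilon} K_{jk}(x-y)\,\chi_D(y)\,dy,
\]
with
\[
K_{jk}(z) = \frac{d}{\omega_{d-1}}\,\frac{z_j z_k}{|z|^{d+2}}\ (j\neq k), \qquad K_{jj}(z) = -\frac{1}{\omega_{d-1}}\,\frac{|z|^2 - d\,z_j^2}{|z|^{d+2}}.
\]

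Next I would verify that each $K_{jk}$ has the form \eqref{eq7.4} and satisfies the hypotheses (i)--(iv) required by Theorem~\ref{theo7.2}. Homogeneity of degree $-d$, evenness in $z$, and smoothness on $\R^d\setminus\{0\}$ are immediate from the explicit formulas. Only the vanishing-mean condition (iv) calls for a brief calculation: for $j\neq k$ the restriction of the spherical factor to $\S^{d-1}$ is odd in $z_j$, hence integrates to zero; for $j=k$ the symmetry identity $\int_{\S^{d-1}} z_j^2\,d\sigma = \omega_{d-1}/d$ (obtained by summing over $j$ and using $|z|=1$) gives $\int_{\S^{d-1}}(1 - d\,z_j^2)\,d\sigma = 0$.

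With (i)--(iv) checked, Theorem~\ref{theo7.2} bounds each truncated integral, uniformly in $\epsilon>0$ and in $x$, by a constant times $\gamma^{-1}\bigl(1 + \log^+(|D|^{1/d}\|\nabla\Phi\|_\gamma/|\nabla\Phi|_{\inf})\bigr)$. The term $-\frac{1}{d}\delta_{jk}\chi_D(x)$ contributes at most $1/d$, which is absorbed into $C'_d$. Summing over the finitely many pairs $(j,k)$ yields the pointwise bound \eqref{eq7.6} on $\R^d\setminus\partial D$. Since $\partial D$ has Lebesgue measure zero, this controls $\|\nabla v\|_\infty$ as the essential supremum of the distributional gradient; equivalently, $v$ is Lipschitz on $\R^d$ with the claimed constant, and the pointwise bound extends to $\partial D$ by continuity. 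The main obstacle --- the logarithmic estimate for the truncated principal value integrals acting on $\chi_D$ --- was already overcome in Theorem~\ref{theo7.2}; what remains here is essentially bookkeeping: identifying the Calder\'on--Zygmund kernels arising from $-\nabla\nabla N$ and confirming they satisfy the structural hypotheses.
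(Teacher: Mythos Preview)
Your proposal is correct and follows exactly the paper's approach: the paper simply states that the corollary is immediate from \eqref{eq7.5}, \eqref{eq7.2} and \eqref{eq7.3}, and you have carefully spelled out what ``immediate'' means---identifying the kernels from Lemma~\ref{lem7.1}, checking hypotheses (i)--(iv), and absorbing the bounded diagonal term. The extra care you take with the vanishing-mean verification and with points on $\partial D$ is appropriate detail that the paper leaves implicit.
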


The Corollary is immediate from \eqref{eq7.5}, \eqref{eq7.2} and
\eqref{eq7.3}.

We need a lemma.

\begin{lemma}\label{lem7.4}
Let $D$ be a $C^{1 + \gamma}$ domain with defining function $\Phi.$
If $\delta > 0$ satisfies
$$
\delta^\gamma \frac{||\nabla \Phi||_{\gamma}} {|\nabla \Phi|_{\rm
{inf}}} \le \frac{1}{2},
$$
then for each $x \in \partial D$, $\partial D \cup B(x,\delta)$ is,
after a rotation around $x,$ the graph of a  $C^{1 + \gamma}$
function and $D \cap B(x,\delta)$ is the part of $B(x,\delta)$ lying
below the graph.
\end{lemma}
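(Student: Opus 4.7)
\medskip

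\noindent\textbf{Proof proposal.} Fix $x\in\partial D$. After a rigid rotation around $x$ we may assume
$$\nabla\Phi(x)=(0,\ldots,0,|\nabla\Phi(x)|),\qquad |\nabla\Phi(x)|\ge |\nabla\Phi|_{\inf}.$$
The plan is to check that $\Phi$ is strictly increasing along every vertical segment contained in $B(x,\delta)$, apply the implicit function theorem to write $\partial D\cap B(x,\delta)$ as a graph $y_d=\varphi(y')$, and then read off the $C^{1+\gamma}$ regularity of $\varphi$ from that of $\Phi$.

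The key (and only nontrivial) quantitative step is the following gradient estimate on $B(x,\delta)$. For every $y\in B(x,\delta)$ the hypothesis on $\delta$ gives
$$|\nabla\Phi(y)-\nabla\Phi(x)|\;\le\;\|\nabla\Phi\|_\gamma\,\delta^\gamma\;\le\;\tfrac{1}{2}|\nabla\Phi|_{\inf}\;\le\;\tfrac{1}{2}|\nabla\Phi(x)|.$$
Splitting into tangential and vertical parts, this yields
$$\partial_d\Phi(y)\;\ge\;|\nabla\Phi(x)|-\tfrac{1}{2}|\nabla\Phi(x)|\;\ge\;\tfrac{1}{2}|\nabla\Phi|_{\inf}>0,\qquad |\nabla'\Phi(y)|\;\le\;\tfrac{1}{2}|\nabla\Phi|_{\inf}.$$
In particular $\partial_d\Phi$ is bounded below by a positive constant throughout $B(x,\delta)$, so $t\mapsto\Phi(y'_0,t)$ is strictly increasing on every vertical chord of $B(x,\delta)$. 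Combined with $\Phi(x)=0$ this forces $\{\Phi<0\}\cap B(x,\delta)$ to lie strictly below $\{\Phi>0\}\cap B(x,\delta)$ along each vertical line, which will deliver the ``part below the graph'' statement once the graph is in place.

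Next I would invoke the implicit function theorem at $x$: since $\partial_d\Phi(x)\ne 0$ and $\Phi$ is $C^{1+\gamma}$, there is a neighborhood on which $\Phi(y',y_d)=0$ can be solved uniquely as $y_d=\varphi(y')$ with $\varphi\in C^{1+\gamma}$. To propagate this from a small neighborhood of $x$ to the full ball, I would use the uniform lower bound $\partial_d\Phi\ge\tfrac12|\nabla\Phi|_{\inf}$ proved above: uniqueness of the solution $y_d=\varphi(y')$ persists on all of $B(x,\delta)$ by the strict monotonicity argument, and existence on the projection of $B(x,\delta)\cap\partial D$ to the $y'$-hyperplane follows from the intermediate value theorem applied vertically (using that $\Phi$ changes sign in $B(x,\delta)$ because $x\in\partial D$ and the vertical derivative is nonzero). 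The formula
$$\partial_i\varphi(y')=-\,\frac{\partial_i\Phi(y',\varphi(y'))}{\partial_d\Phi(y',\varphi(y'))},\qquad 1\le i\le d-1,$$
together with the $C^\gamma$-regularity of $\nabla\Phi$ and the lower bound $\partial_d\Phi\ge\tfrac12|\nabla\Phi|_{\inf}$, then furnishes $\nabla\varphi\in C^\gamma$ with the quantitative bound $\|\nabla\varphi\|_\gamma\le C_d\,\|\nabla\Phi\|_\gamma/|\nabla\Phi|_{\inf}=C_d\,q(D)$ that is needed later in the paper.

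The main obstacle is really bookkeeping: making sure the graph description is valid on \emph{all} of $B(x,\delta)$ and not merely on a smaller neighborhood given by the abstract implicit function theorem. The uniform gradient bound above is precisely what removes that obstacle, because it turns the implicit function argument into a global-on-$B(x,\delta)$ statement via monotonicity in the vertical direction. Everything else is a direct consequence of $\Phi\in C^{1+\gamma}$ and the quantitative choice of $\delta$.
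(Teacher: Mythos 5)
Your proof is correct, and it reaches the conclusion by a mechanism that is genuinely different from the paper's, although both arguments consume the hypothesis $\delta^\gamma\|\nabla\Phi\|_\gamma/|\nabla\Phi|_{\inf}\le\tfrac12$ in the same way. The paper never establishes your uniform gradient bounds on the ball; instead it takes two points $p,q\in\partial D\cap B(x,\delta)$, writes $0=\Phi(p)=|\nabla\Phi(x)|\,p_d+E(p)$ and likewise for $q$ via a first-order Taylor expansion at $x$, and subtracts: the Hölder control of the error terms gives $|p_d-q_d|\le|p'-q'|$ directly, so the boundary piece is a $1$-Lipschitz graph by a two-point estimate on boundary points only, after which the implicit function theorem upgrades the regularity. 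You instead prove the pointwise bounds $\partial_d\Phi\ge\tfrac12|\nabla\Phi|_{\inf}$ and $|\nabla'\Phi|\le\tfrac12|\nabla\Phi|_{\inf}$ throughout $B(x,\delta)$ and deduce the graph property from strict vertical monotonicity of $\Phi$. Your route has the advantage of delivering the ``$D\cap B(x,\delta)$ lies below the graph'' clause explicitly (the paper leaves this implicit), and of making the globalization of the implicit function theorem transparent; the paper's two-point argument is slightly leaner and produces the $1$-Lipschitz bound on $\varphi$ without first differentiating. Both yield the quantitative estimate $\|\nabla\varphi\|_\gamma\le C_d\,q(D)$ needed later (the paper records the closely related bound $|\nabla\varphi(x')|\le(2d)^{1/2}q(D)r^\gamma$ by the same implicit differentiation you use). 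No gaps.
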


\begin{proof}
Assume, without loss of generality, that $x=0$ and that
$\nabla\Phi(0)= (0,\dotsc,0, \partial_d \Phi(0))$, $\partial_d
\Phi(0) > 0.$ Take two points $p, q \in \partial D \cup B(0,\delta)$
and set $p=(x',x_d)$ with $x'=(x_1,\dotsc,x_{d-1})$, and
$q=(y',y_d)$ with $y'=(y_1,\dotsc,y_{d-1}).$ Then
$$
0=\Phi(p)=\Phi(0)+\nabla\Phi(0)\cdot p +E(p)=
|\nabla\Phi(0)|x_d+E(p)
$$
and similarly for $q.$ Subtracting and taking absolute value
$$
|\nabla\Phi(0)||x_d-y_d| = |E(p)-E(q)| \le \|\nabla\Phi\|_\gamma \,
\delta^\gamma\,(|x'-y'|+|x_d-y_d|)
$$
and thus
$$
\frac{|x_d -y_d|}{|x'-y'|} \le \frac{\|\nabla\Phi\|_\gamma}{|\nabla
\Phi|_{\inf}}\, \delta^\gamma \,  \left(1+ \frac{|x_d
-y_d|}{|x'-y'|}\right),
$$
which yields
$$
\frac{|x_d -y_d|}{|x'-y'|} \le 1.
$$
This says that $\partial D \cup B(x,\delta)$ is the graph of a
Lipschitz function $x_d = \varphi(x'),$ with domain an open
subset~$U$ of $\{x' \in \R^{d-1}: |x'| < \delta\},$ satisfying
$|\nabla\varphi(x')| \le 1$, $ x' \in U.$ By the implicit function
theorem $\varphi$ is of class $C^{1 + \gamma}$ on its domain and
this completes the proof of the Lemma.

Notice that $U$ contains the ball $\{x' \in \R^{d-1}: |x'| <
\delta/2^{1/2}\}.$  We need also the estimate
\begin{equation}\label{eq7.7}
|\nabla\varphi(x')| \le (2d)^{1/2} \frac{||\nabla \Phi||_{\gamma}}
{|\nabla \Phi|_{\inf}}\,r^\gamma, \quad |x'| \le r <
\frac{\delta}{{2}^{1/2}}.
\end{equation}
By implicit differentiation
$$
\partial_j \varphi(x')=- \frac{\partial_j \Phi(x',\varphi(x'))}{\partial_d
\Phi(x',\varphi(x'))}, \quad 1 \le j \le d-1,
$$
and so
$$
|\partial_j \Phi(x',\varphi(x'))| \le
|\partial_d\Phi(x',\varphi(x'))|, \quad 1 \le j \le d-1,
$$
which gives
$$
|\nabla\Phi(x',\varphi(x'))| \le d^{1/2}\,|\partial_d
\Phi(x',\varphi(x'))|.
$$
Since $\partial_j \Phi(0)=0$, $1 \le j\le d-1,$
$$
|\nabla\varphi(x')| \le
d^{1/2}\frac{\|\nabla\Phi\|_\gamma\,(2^{1/2}r)^\gamma}{|\nabla
\Phi|_{\inf}},\quad |x'| \le r,
$$
which completes the proof of \eqref{eq7.7}.
\end{proof}

\begin{proof}[Proof of Theorem \ref{theo7.2}]
Assume first that $x \in
\partial D.$ Take $\delta >0$ such that $\delta^\gamma \frac{||\nabla \Phi||_{\gamma}} {|\nabla \Phi|_{\rm
{inf}}} = \frac{1}{2}$ and set $\eta= \delta/2^{1/2}.$  Let
$\epsilon$ satisfy $0 <\epsilon < \eta.$ Then
\begin{multline*}
\int_{|y-x| > \epsilon} K(x -y) \newchi_D(y) \,dy = \int_{D \cap
\{\epsilon < |y-x| < \eta\}} K(x -y)\, dy
+ \int_{{D \cap \{\eta < |x-y| < |D|^{1/d}\}}} K(x -y)\,dy   \\
+\int_{D \cap \{|y-x| > |D|^{1/d}\}} K(x -y) \,dy =
  I_1 + I_2 + I_3.
\end{multline*}
 Thus
\begin{equation*}\label{eq7.8}
|I_3| \leq \int_{D \cap \{|y-x| > |D|^{1/d}\}}
\frac{||\Omega||_{\infty}}  {|x-y|^d}\, dy \leq
\frac{||\Omega||_{\infty}}  {|D|} |D|
 = ||\Omega||_{\infty},
\end{equation*}
and
\begin{equation*}\label{eq7.9}
|I_2| \leq ||\Omega||_{\infty} \omega_{d-1} \int_{\eta}^{|D|^{1/d}}
\frac{dr} {r} = ||\Omega||_{\infty} \omega_{d-1} \log
\left(\frac{|D|^{1/d}} {{\eta}}\right) \le C_d \, C(\Omega) \left(1+
\log^+ \left(|D|^{1/d} \frac{\|\nabla \Phi \|_{\gamma}} {|\nabla
\Phi|_{\inf}} \right) \right).
\end{equation*}
If $\eta \ge |D|^{1/d},$ then we let $I_2 =0$ and this brings in
again the positive part of the logarithm.

Let us turn to $I_1.$ Assume that $x=0$ and that we are in the
situation discussed in the proof of Lemma~\ref{lem7.4}. Taking polar
coordinates we get
$$
|I_1|\le \left|\int_{\epsilon}^\eta  \int_{A(r)} \Omega(\omega)
\,d\sigma(\omega) \,\frac{dr}{r}\right|,
$$
where $A(r)= \{\omega \in S^{d-1} : r \omega \in D \}.$  Let $H$
stand for the half-space $\{x \in \R^d : x_d < 0\}.$ Since $\Omega$
is even and has zero integral on $S^{d-1}$, we conclude that the
integral of $\Omega$ on the hemisphere $S^{d-1} \cap H$ is also
zero. Hence
$$
\int_{A(r)} \Omega(\omega) \,d\sigma(\omega) = \int_{B(r)}
\Omega(\omega) \,d \sigma(\omega)-\int_{C(r)} \Omega(\omega)
\,d\sigma(\omega),
$$
where $B(r)= \{\omega \in S^{d-1} : r \omega \in D \setminus H\}$
and  $C(r)= \{\omega \in S^{d-1} : r \omega \in H\setminus D\}.$ Let
us proceed to estimate the integral on $B(r)$ (the integral on
$C(r)$ is estimated similarly).

For some absolute constant $C_0$ (which can be taken to be $\pi/2$)
one has, by \eqref{eq7.7},
$$
\sigma(B(r)) \le C_0 \sup_{|x'|\le r} |\varphi(x')| \frac{1}{r} \le
C_0 \sup_{|x'|\le r} |\nabla \varphi(x')| \le C_0 (2d)^{1/2}
\frac{||\nabla \Phi||_{\gamma}}  {|\nabla \Phi|_{\inf}} \, r^\gamma.
$$
 Therefore
$$
\left|\int_{\epsilon}^\eta  \int_{B(r)} \Omega(\omega)
\,d\sigma(\omega) \frac{dr}{r}\right| \le ||\Omega||_{\infty}
\int_{\epsilon}^\eta \sigma(B(r)) \frac{dr}{r} \le
||\Omega||_{\infty} \,C_0 (2d)^{1/2} \,\frac{||\nabla
\Phi||_{\gamma}}  {|\nabla \Phi|_{\inf}}\frac{1}{\gamma}\,
\eta^\gamma \le C_d\,C(\Omega) \frac{1}{\gamma},
$$
which completes the proof for $0< \epsilon < \eta.$ If $\eta <
\epsilon $, then $I_1 =0$ and  $I_2$ and $I_3$ are estimated as
before.

Let us assume now that $x \notin \partial D.$ Let $\epsilon_0$
denote the distance from $x$ to $\partial D.$ If $\epsilon <
\epsilon_0,$ then
$$
\int_{|y-x| > \epsilon} K(x -y) \newchi_D(y)\, dy = \int_{|y-x| >
\epsilon_0} K(x -y) \newchi_D(y) \,dy
$$ and so we can assume that $\epsilon_0 \le \epsilon.$ Take $x_0
\in \partial D$ with $|x-x_0|=\epsilon_0$ and define
$$
\Delta = \int_{|y-x| > \epsilon} K(x -y) \newchi_D(y)\, dy
-\int_{|y-x_0| > 2 \epsilon} K(x_0 -y) \newchi_D(y)\, dy.
$$
Then $\Delta=\Delta_1 + \Delta_2$, where
$$
\Delta_1 = \int_{|y-x_0| > 2 \epsilon} \Bigl(K(x -y)-K(x_0-y) \Bigr)
\newchi_D(y) \,dy
$$
and
$$
\Delta_2 = \int_{B(x_0,2\epsilon)\setminus B(x,\epsilon)} K(x -y)
\newchi_D(y)\,dy.
$$
We then have
$$
|\Delta_2| \le ||\Omega||_{\infty}
\frac{|B(x_0,2\epsilon)|}{\epsilon^d} = C_d \,||\Omega||_{\infty}
$$
and, by a gradient estimate,
$$
|\Delta_1| \le C(\Omega)\, |x-x_0|\int_{2\epsilon <
|y-x_0|}\frac{dy}{|x-x_0|^{d+1}} \le C_d\,C(\Omega),
$$
which completes the proof of Theorem \ref{theo7.2}.
\end{proof}

\section{Global Existence}\label{sec7}

We prove in this section that the Yudovich flow $X(\alpha,t)$
solving the ODE \eqref{flux0} and \eqref{velocity0} is smooth in the
directions tangential to $\partial D_0$ for all $t \in \R.$  More
precisely, the restriction of $X(\cdot,t)$ to $\partial D_0$ is in
$C^{1+\gamma}(\partial D_0, \R^d)$ for all times $t \in \R.$ In
particular, $\partial D_t$ is a domain of class $C^{1+\gamma}$ for
$t \in \R.$ The local existence Theorem \ref{theo2.1} shows that
$X(\cdot,t)$  is in $C^{1+\gamma}(\partial D_0, \R^d)$ for $t
\in(-T,T),$ where $T$ is the small time given by the Picard Theorem.
Assume that $T$ is maximal with the property that the solution
$X(\cdot,t)$  is defined in $(-T,T).$ Our goal is to prove the a
priori estimates on $(-T,T)$ which will let us to conclude that
indeed $T=\infty.$

 It is enough
to prove that $\partial D_t$ is a domain of class $C^{1+\gamma}$ for
all $t \in \R.$ In fact, if this is true, then we have
\begin{equation}\label{lotiX}
\|X\|_{1+\gamma, \partial D_0} < \infty, \quad t \in \R.
\end{equation}
 Otherwise, let $T$ be a maximal time so that \eqref{lotiX} holds for $t \in
 (-T,T).$  Taking  $D_T$ or $D_{-T}$ as initial domain
 at time $T$ or $-T$ (not at time $0$ !) in Theorem \ref{theo2.1}  we contradict the maximality of $T.$

 To show that  $\partial D_t$ is a domain of class $C^{1+\gamma}$ for
all $t \in \R$ take a defining function $\Phi_0$ for $D_0.$ Then
$\Phi_0$ is in $C^{1+\gamma}(\R^d),$
 $D_0 = \{\Phi_0 <0\},$ $\partial
D_0 = \{\Phi_0 = 0\}$ and $\nabla \Phi_0(x) \neq 0$, $x \in \partial
D_0 .$  Consider the equation
\begin{equation}\label{thomas}
\frac{\partial}{\partial t} \Phi(x,t)+ \nabla \Phi(x,t){\cdot} v(x,t)  =
-\Phi(x,t) \chi_{D_t}(x), \quad x \in \R^d, \quad t \in \R,
\end{equation}
with initial condition $\Phi(x,0)= \Phi_0(x).$  Then
$$\Phi(x,t) = \Phi_0(X^{-1}(x,t)), \quad x \in \R^d
\setminus D_t,$$
 and
 $$\Phi(x,t) =
e^{-t}\,\Phi_0(X^{-1}(x,t)),  \quad x \in D_t,$$
 where, for a fixed time $t,$  $X^{-1}(x,t)$
is the inverse of the mapping $x \rightarrow X(x,t).$  Notice that
$\Phi(x,t)$ is continuously differentiable in the open sets $D_t$
and $\R^d \setminus \overline{D_t},$ but that $\nabla \Phi(x,t)$
may, a priori, have a jump at the boundary of $D_t,$ just as $\nabla
X^{-1}(x,t)$ or  $\nabla X(x,t).$ We will show in the next section
that $\Phi(x,t)$ is of class $C^{1+\gamma}(\R^d)$ and thus a
defining function for $D_t.$ We use this fact freely in this
section.

The a priori estimates we need are collected in the following
statement.

\begin{theorem}\label{apriori}
Let $\Phi(\cdot,t)$ the defining function for $D_t$ determined by
\eqref{thomas}. Then
\begin{equation}\label{prioriuniform}
\|\nabla \Phi(\cdot,t)\|_\infty \le \|\nabla \Phi(\cdot,0)\|_\infty
\,\exp \int_0^t (1+\|\nabla v(\cdot,s)\|_\infty)\,ds,
\end{equation}

\begin{equation}\label{prioriinf}
|\nabla \Phi(\cdot,t)|_{\operatorname{inf}} \ge  |\nabla
\Phi(\cdot,0)|_{\operatorname{inf}} \,\exp (-\int_0^t (1+\|\nabla
v(\cdot,s)\|_\infty)\,ds)
\end{equation}
and
\begin{equation}\label{priorigamma}
\|\nabla \Phi(\cdot,t)\|_{\gamma} \le \|\nabla
\Phi(\cdot,0)\|_{\gamma} \,\exp (C \int_0^t (1+\|\nabla
v(\cdot,s)\|_\infty)\,ds).
\end{equation}
\end{theorem}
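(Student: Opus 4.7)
The plan is to derive a PDE for $\nabla\Phi$ from~\eqref{thomas}, apply Gronwall along characteristics for \eqref{prioriuniform} and \eqref{prioriinf}, and for \eqref{priorigamma} use a Calder\'on-Zygmund commutator argument to bound the Hölder seminorm of the right-hand side in terms of the Eulerian $L^\infty$ norm of $\nabla v$, which is then fed through Corollary~\ref{coro7.3} to close the loop.

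Differentiating \eqref{thomas} in $x_k$ and using that $\Phi\,\nabla\chi_{D_t}=0$ as a distribution (since $\Phi$ vanishes on $\partial D_t$, which is the whole point of the genuine defining function built in Section~\ref{sec8}), one obtains for $x\notin\partial D_t$ the identity
$$
\Bigl(\partial_t + v\cdot\nabla\Bigr)\nabla\Phi\ =\ -(\nabla v)^T\nabla\Phi\ -\ \chi_{D_t}\,\nabla\Phi,
$$
which extends to either side of $\partial D_t$ by continuity of $\nabla\Phi$. Evaluating along a trajectory $X(\alpha,t)$ gives a vector ODE $\dot{\tilde g}=-M(\alpha,t)\,\tilde g$ for $\tilde g(\alpha,t):=\nabla\Phi(X(\alpha,t),t)$, with $\|M(\alpha,t)\|\le 1+\|\nabla v(\cdot,t)\|_\infty$. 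Gronwall and a supremum over $\alpha\in\R^d$ yield \eqref{prioriuniform}. The same estimate in reverse, restricted to $\alpha\in\partial D_0$ so that $X(\alpha,t)$ stays on $\partial D_t$ (where the worst-case contribution of the $-\chi_{D_t}\nabla\Phi$ term from the interior side is $-|\nabla\Phi|$), gives \eqref{prioriinf} after taking the infimum.

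The bound \eqref{priorigamma} is the delicate one. Using Lemma~\ref{lem7.1}, decompose $\nabla v=-\tfrac{1}{d}\chi_{D_t}I+R[\chi_{D_t}]$ with $R[\chi_{D_t}]$ the symmetric matrix of even, homogeneous principal-value operators of kernels of the form~\eqref{eq7.4}, so that the right-hand side of the $\nabla\Phi$-equation is
$$
F\ =\ -R[\chi_{D_t}]\,\nabla\Phi\ -\ \tfrac{d-1}{d}\,\chi_{D_t}\,\nabla\Phi.
$$
For each scalar component of $R$ with kernel $K$, apply the commutator identity
$$
R[\chi_{D_t}](x)\,\nabla\Phi(x)\ -\ R[\chi_{D_t}\nabla\Phi](x)\ =\ \int K(x-y)\bigl(\nabla\Phi(x)-\nabla\Phi(y)\bigr)\,\chi_{D_t}(y)\,dy,
$$
whose right-hand side is controlled in $C^\gamma$ by Lemma~\ref{lem3.2} applied with $f=\nabla\Phi$, $b=\chi_{D_t}$, and the BMO-type constant $B\le 1+C(1+\|\nabla v(\cdot,t)\|_\infty)$ supplied by Theorem~\ref{theo7.2}. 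The residual $R[\chi_{D_t}\nabla\Phi]$ is handled together with the pointwise piece $\chi_{D_t}\nabla\Phi$ of $F$ by exploiting that $\chi_{D_t}\nabla\Phi=\nabla(\Phi\chi_{D_t})$ distributionally (because $\Phi$ vanishes on $\partial D_t$) and that $\nabla\Phi$ is parallel to the normal on $\partial D_t$: this provides a Lipschitz antiderivative against which one integrates by parts inside $R[\,\cdot\,]$, reducing the Hölder control to Lemmas~\ref{lem3.1} and~\ref{lem3.2} with the same constant $C(1+\|\nabla v\|_\infty)$. Gathering,
$$
\bigl\|F(\cdot,t)\bigr\|_\gamma\ \le\ C\bigl(1+\|\nabla v(\cdot,t)\|_\infty\bigr)\,\|\nabla\Phi(\cdot,t)\|_\gamma,
$$
and a final Gronwall argument in the Lagrangian $\gamma$-seminorm of $\tilde g$ (using $\operatorname{Lip} X(\cdot,t)\le\exp\int_0^t\|\nabla v(\cdot,s)\|_\infty\,ds$ to transfer between Eulerian and Lagrangian Hölder seminorms up to factors absorbed into the exponent~$C$) yields \eqref{priorigamma}.

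The main obstacle is precisely the commutator step. The naive bound $\|R[\chi_{D_t}]\nabla\Phi\|_\gamma\le\|R[\chi_{D_t}]\|_\gamma\,\|\nabla\Phi\|_\infty$ would drag the full $\gamma$-seminorm of $\nabla v$ into the Gronwall loop, and no uniform-in-time estimate for $\|\nabla v(\cdot,t)\|_\gamma$ is available. The commutator identity replaces that bad factor by $\|R[\chi_{D_t}]\|_\infty$, which is exactly the quantity tamed by the logarithmic inequality~\eqref{eq7.6}; that is what allows the Gronwall loop to close and, combined with Corollary~\ref{coro7.3}, yields the $C^{1+\gamma}$ regularity of $\partial D_t$ for all times.
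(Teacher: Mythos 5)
Your proposal follows the paper's proof in all essentials: the transport equation for $\nabla\Phi$ with the extra $-\chi_{D_t}\nabla\Phi$ term, Gr\"onwall along trajectories for \eqref{prioriuniform} and \eqref{prioriinf}, and for \eqref{priorigamma} the commutator structure whose constant is tamed by the logarithmic bound of Theorem~\ref{theo7.2}. The one point to tighten is the residual term: the paper's Lemma~\ref{gradient} (your identity $\chi_{D_t}\nabla\Phi=\nabla(\chi_{D_t}\Phi)$ convolved with $\nabla N$) shows that $\operatorname{p.v.}\nabla^2N*(\chi_{D_t}\nabla\Phi)$ equals $\tfrac{d-1}{d}\chi_{D_t}\nabla\Phi$ \emph{exactly}, so the residual and the pointwise piece cancel identically and the material derivative of $\nabla\Phi$ is precisely the commutator integral of Lemma~\ref{commutatorformula}; this exact cancellation (rather than a separate H\"older estimate via Lemmas~\ref{lem3.1}--\ref{lem3.2}) is what is needed, since $R[\chi_{D_t}\nabla\Phi]$ and $\chi_{D_t}\nabla\Phi$ each jump across $\partial D_t$ and are not individually in $C^\gamma(\R^d)$.
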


\begin{proof}
Taking derivatives in \eqref{thomas} we obtain that the material
derivative of $\nabla \Phi$ is
\begin{equation}\label{matder}
\frac{D}{Dt}(\nabla \Phi) = - \nabla v (\nabla \Phi)- \chi_{D_t}
\,\nabla \Phi .
\end{equation}
We have used here that, since $\Phi(x,t)$ vanishes on $\partial
D_t,$
\begin{equation*}
\Phi(x,t)\, \nabla \chi_{D_t}(x)= \Phi(x,t)\, \vec{n}(x)\,
d\sigma(x)=0.
\end{equation*}
By \eqref{matder}
\begin{equation*}
|\nabla \Phi(x,t)| \le |\nabla \Phi(x,0)| + \int_o^t \left(
1+\|\nabla v (\cdot,s)\|_\infty \right) \|\nabla
\Phi(\cdot,s)\|_\infty \,ds
\end{equation*}
and \eqref{prioriuniform} follows from Gr\"onwall.

For \eqref{prioriinf} take $x \in \partial D_t.$ Then
\begin{equation*}\label{loggrad}
\begin{split}
 \frac{D}{D t} \log |\nabla \Phi(x,t)| &= \frac{1}{|\nabla
 \Phi(x,t)|^2}\, \nabla \Phi(x,t)\cdot \frac{D}{D t} (\nabla
 \Phi(x,t)) \\ & \ge - \frac{1}{|\nabla
 \Phi(x,t)|} \left|\frac{D}{D t} (\nabla
 \Phi(x,t))\right| \\ &  \ge -(1+\|v(\cdot,t)\|_\infty)
\end{split}
\end{equation*}
and so
\begin{equation*}
|\nabla \Phi(x,t)| \ge |\nabla \Phi(x,0)| \, \exp\left(-\int_0^t
(1+\|\nabla v (\cdot,s)\|_\infty) \, ds \right),
\end{equation*}
which yields \eqref{prioriinf} at once.

For \eqref{priorigamma} we need two lemmas. The first one is an elementary remark.

\begin{lemma}\label{gradient}
If $D$ is a bounded $C^{1+\gamma}$ domain and $\Phi$ is a defining
function for $D$, then
\begin{equation}\label{lemgrad}
\chi_{D} (x) \Phi(x) = \nabla N * (\chi_D \nabla \Phi)(x), \quad x
\in \R^d.
\end{equation}
\end{lemma}

\begin{proof}
On one hand, the functions in either side of \eqref{lemgrad} are continuous functions. On the other hand,
we have the distributional identities

\begin{equation*}
\nabla N * (\chi_D \nabla \Phi) = \nabla N * \nabla (\chi_D \Phi) = 
\Delta N * \chi_D \Phi = \chi_D \Phi.
\end{equation*}
\end{proof}

Denote by $HN$ the distributional Hessian matrix of $N.$ If $i \neq
j$ the entry in $HN$ corresponding to the $i-$th row and $j-$th
column is the distribution
\begin{equation}\label{dijN}
\operatorname{p.v.}\, \partial_{ij}^2 N(x) =  \operatorname{p.v.} -
\frac{d}{\omega_{d-1}} \frac{x_i x_j}{|x|^{d+2}},
\end{equation}
while the diagonal term corresponding to the indexes $i=j$ is the
distribution
\begin{equation}\label{diiN}
\operatorname{p.v.} \partial_{ii}^2 N(x) + \frac{1}{d} \delta_0 =
\operatorname{p.v.} \frac{1}{\omega_{d-1}} \frac{|x|^2-d
x_i^2}{|x|^{d+2}}  + \frac{1}{d} \delta_0,
\end{equation}
where $\delta_0$ stands for the Dirac delta at the origin. In
\eqref{dijN} and \eqref{diiN} the second order partial derivatives
of $N$ are taken pointwise for $x \neq 0.$ Hence
\begin{equation}\label{HN}
HN =  \operatorname{p.v.} \nabla^2 N + \frac{1}{d} I_0,
\end{equation}
where $I_0$ stands for  the diagonal matrix with $\delta_0$ in the
diagonal. In the next lemma we establish a commutator formula which
is crucial in what follows. In the statement below $\nabla^2 N(x),
\; x \neq 0,$ stands for the $d \times d$ square matrix with entries
the pointwise partial derivatives $\partial_{ij} N(x).$  The
integrand in the right hand side is absolutely integrable because
the vector $\nabla \Phi$ satisfies a H\"{o}lder condition of order
$\gamma.$

\begin{lemma}\label{commutatorformula}
\begin{equation}\label{commfor}
 \frac{D}{Dt}(\nabla \Phi(x,t)) = \int_{D_t} \nabla^2 N(x-y) \left(\nabla \Phi(x)- \nabla \Phi(y) \right) \, dy , \quad x \in \R^d.
\end{equation}
\end{lemma}

\begin{proof}
Since $v = - \nabla N * \chi_{D_t}$, taking gradient we get
\begin{equation}\label{gradv}
\nabla v = - HN * \chi_{D_t} = -\operatorname{p.v.} \nabla^2 N *
\chi_{D_t} - \frac{1}{d} \chi_{D_t} I,
\end{equation}
with $I$ the identity matrix.  Identities  \eqref{matder} and
\eqref{gradv} yield
\begin{equation}\label{matder1}
\frac{D}{Dt}(\nabla \Phi) = \left(\operatorname{p.v.} \nabla^2 N *
\chi_{D_t}\right)(\nabla \Phi) + \frac{1}{d} \chi_{D_t} \nabla \Phi
- \chi_{D_t} \nabla \Phi.
\end{equation}
Taking gradient in \eqref{lemgrad}
\begin{equation}\label{gradchiD}
\chi_{D_t} \nabla \Phi = HN * (\chi_{D_t} \nabla \Phi) =
\operatorname{p.v.} \nabla^2 N * (\chi_{D_t} \nabla \Phi) +
\frac{1}{d} \chi_{D_t} \nabla \Phi.
\end{equation}
Combining \eqref{matder1} and \eqref{gradchiD} completes the proof
of the lemma.
\end{proof}

Lemma \ref{commutatorformula} yields the a priori estimate
\eqref{priorigamma} exactly as in \cite{BC} or \cite{MB}. Theorem
\ref{apriori} is then proved.
\end{proof}
Inserting in \eqref{eq7.6} (the logarithmic estimate of $\|\nabla
v(\cdot,t)\|_\infty$ in terms of $q(D_t)$) the a priori estimates
\eqref{prioriinf} and \eqref{priorigamma} we get

\begin{equation*}\label{gradvuniform}
||\nabla v (x,t)||_{\infty} \le C + C \int_0^t ||\nabla v
(\cdot,\tau)||_{\infty} \, d\tau,
\end{equation*}
which yields by Gr\"onwall's inequality
\begin{equation}\label{expgradv}
||\nabla v (x,t)||_{\infty} \le C \,e^{C t}, \quad -T< t< T.
\end{equation}
 Upon introducing this in  \eqref{prioriuniform} and \eqref{prioriinf} we
conclude that we have the double exponential estimate
\begin{equation}\label{expqD}
q(D_t) =
\frac{\|\nabla\Phi(\cdot,t)\|_\gamma}{|\nabla\Phi(\cdot,t)|_{inf}}\le
C \exp(e^{Ct}), \quad -T< t< T.
\end{equation}
For fixed $t,$ the flux $\alpha \rightarrow X(\alpha,t)$ is a
bilipschitz homeomorphism. In particular, we have
\cite[(4.47),p.149]{MB}
\begin{equation}\label{fluxbi}
||\nabla X(\alpha,t)||_{\infty} \leq \exp\left(\int_0^t ||\nabla
v(\cdot,s)||_{\infty}\,ds \right),
\end{equation}
and so
\begin{equation*}\label{length}
\begin{split}
\sigma(\partial D_t) & = \int_{\partial D_0}
\Big|\bigwedge^{d-1}_{j=1} DX (\beta, t)
(T_{j}(\beta))\Big|\,d\sigma(\beta) \\& \le \int_{\partial D_0}
(d-1)^{\frac{1}{2}}\,\|DX(\cdot,t)\|_{\infty}^{d-1}\,d\sigma(\beta)
\\& \le
 (d-1)^{\frac{1}{2}}\, \exp\left((d-1) \int_0^t ||\nabla
v(\cdot,s)||_{\infty}\,ds \right)\, \sigma(\partial D_0).
\end{split}
\end{equation*}
Hence, by \eqref{expgradv},
\begin{equation}\label{explength}
\sigma(\partial D_t) \le  (d-1)^{\frac{1}{2}}\,\sigma(\partial
D_0)\, \exp(C\, e^{Ct}) , \quad -T< t< T.
\end{equation}
A similar estimate of the diameter of $D_t$ follows from
\eqref{expgradv} and \eqref{fluxbi}:
\begin{equation}\label{expdiam}
\operatorname{diam}(D_t) \le \operatorname{diam}(D_0)\, \exp(C\,
e^{Ct}), \quad -T< t< T.
\end{equation}
 We can combine the estimates
\eqref{expqD}, \eqref{explength} and \eqref{expdiam}  with Theorem
\ref{theo2.1} to show that $D_t$
 is a  $C^{1+\gamma}$ domain for all $t \in \R$ and thus complete the proof of our main result. For that, assume that $T <
 \infty$ is the maximal time for which $D_t$ is a  $C^{1+\gamma}$ domain for all $t \in (-T,T)$.
 Since the size of the interval in which the local solution of
 Theorem  \ref{theo2.1} exists depends only on the quantities  $q(D_0), \sigma(\partial D_0)$ and
$\operatorname{diam}(D_0)$   and these quantities are uniformly
bounded for $D_t$ with $t \in (-T,T),$ we can apply Theorem
\ref{theo2.1} with initial condition $D_{\tau_0}$ at time $\tau_0$
(not at time $0$!) with $\tau_0$ sufficiently close to $T$ so that
the new interval of existence goes beyond $T.$ This contradicts the
maximality of $T.$

Notice that we have concluded the proof without proving any a priori
estimate for $\|D X(\cdot,t)\|_{\gamma, \partial D_0}.$ This is the
reason why we took particular care in estimating the local time of
existence in terms of quantities related to the smoothness of the
initial domain $D_0.$ A final remark on inequality \eqref{expdiam} is in order.
One can easily prove the much better estimate $\operatorname{diam}(D_t) \le C_d\, \operatorname{diam}(D_0)$, 
with $C_d$ a dimensional constant, using the straightforward potential theoretic estimate
$|v(x,t)| \le C_d\, |D_t|^{1/d}, \; x \in \mathbb{R}^d, \; t \in \mathbb{R}$, and the fact that
$ |D_t| = e^{-t}|D_0|.$

\section{The gradient of $\Phi$ has no jump}\label{sec8}

In this section we prove that the function $\Phi(x,t)$  defined by
\eqref{thomas} is of class~$C^{1+\gamma}(\mathbb{R}^{d})$.

Let $X(\alpha,t)$ be the Yudovich flow \eqref{flux0} and
\eqref{velocity0}. The initial domain $D_{0}$ is bounded and has
boundary of class~$C^{1+\gamma}$, $0<\gamma <1$. By
Theorem~\ref{theo2.1} we know that for some $T>0$
\begin{equation}\label{fluxX}
X(\alpha,t)\in C^{1} \bigl((-T,T), C^{1+\gamma} (\partial
D_{0},\mathbb{R}^{d})\bigr).
\end{equation}
Fix a time $t,\; $$0<t<T$. By \eqref{eq7.6} $\|\nabla
v(\cdot,t)\|_{\infty}$ is finite and controlled by the constant
$q(D_t)$ describing the $C^{1+\gamma}$ character of $\partial
D_{t}$. In view of \eqref{fluxX}  this constants are uniformly
bounded for $|\tau|\le t$. Hence, by \eqref{eq7.6},
\begin{equation}\label{gradvinf}
\|\nabla v(\cdot,\tau)\|_{\infty} \le q,\quad |\tau|\le t,
\end{equation}
for a positive constant $q$ depending only on $t.$ We know from
Lemma \ref{eq7.1} that the entries of the matrix~$\nabla v(\cdot,t)$
are given by singular integrals with even kernels plus a scalar
multiple of~$\newchi_{D_t}$. By the main lemma of~\cite{MOV} $\nabla
v(\cdot, t)$ satisfies a H\"older condition of order $\gamma$ on
each of the open sets~$D_{t}$ and $\mathbb{R}^{d}\setminus
\overline{D_{t}}$, in spite of having a jump at~$\partial D_{t}$.
Again the constants of this H\"older conditions are controlled by
the $C^{1+\gamma}$ character of~$\partial D_{t}$. Therefore, for
some other constant $q$ depending only on $t,$
\begin{equation}\label{gradvgam}
\|\nabla v(x_1,\tau) - \nabla v (x_{2},\tau)\| \le q \;
|x_{1}-x_{2}|^{\gamma}, \quad |\tau|\le t,
\end{equation}
provided $x_{1},x_{2}\in D_{\tau}$ or $x_{1},x_{2}\in
\mathbb{R}^{d}\setminus \overline{D_{\tau}}$. The estimates
\eqref{gradvinf} and \eqref{gradvgam} imply that $\nabla X(\cdot,t)$
extends continuously to~$\partial D_{0}$ from either side. In the
same vein, $\nabla X^{-1}(\cdot,t)$ extends continuously
to~$\partial D_{t}$  from either side. This follows from standard
estimates and Gr\"onwall's inequality, as we recall below for the
sake of the reader. One starts by
$$
X(\alpha,t)=\alpha +\int^{t}_{0} v(X(\alpha,\tau),\tau)\,d\tau_.
$$
Using \eqref{gradvinf} we obtain
$$
|X(\alpha,t)-X(\beta,t)| \le |\alpha-\beta| +q \int^{t}_{0} |X
(\alpha,\tau)-X(\beta,\tau)|\,d\tau_,
$$
which yields by Gr\"onwall
$$
|X(\alpha,t)- X(\beta,t)|\le |\alpha-\beta|\, e^{qt}.
$$
Since
$$
\nabla X (\alpha,t)=I+\int^{t}_{0} \nabla v
(X(\alpha,\tau),\tau)\circ \nabla X(\alpha,\tau)\,d\tau,
$$
we get
$$
\|\nabla X(\cdot,t)\|_{\infty} \le e^{qt}
$$
and
$$
\|\nabla X(\alpha,t)-\nabla X(\beta,t)\| \le q\,
|\alpha-\beta|^{\gamma} \int^{t}_{0}
e^{q(\gamma+1)\tau}\,d\tau+\int^{t}_{0} q \|\nabla
X(\alpha,\tau)-\nabla X (\beta,\tau)\|\,d\tau,
$$
provided $\alpha,\beta \in D_{0}$ or $\alpha,\beta \in
\mathbb{R}^{d}\setminus \overline{D_{0}}$. Thus
$$
\|\nabla X(\alpha,t)-\nabla X(\beta,t)\| \le \frac{1}{\gamma}
e^{q(\gamma+1)t}|\alpha-\beta|^{\gamma},
$$
for $\alpha,\beta \in D_{0}$ or $\alpha,\beta \in
\mathbb{R}^{d}\setminus \overline{D_{0}}$.

Since
$$
\frac{d}{d\tau} X^{-1} (x,\tau) = -v (X^{-1}(x,\tau),t-\tau)
$$
the same argument applied to~$X^{-1}$ yields
$$
\|\nabla X^{-1}(x,t)-\nabla X^{-1}(y,t)\| \le \frac{1}{\gamma}
e^{q(\gamma+1)t} |x-y|^{\gamma},
$$
for $x,y\in D_{t}$ or $x,y\in \mathbb{R}^{d}\setminus
\overline{D_{t}}$.

For $x\in\partial D_{t}$ set
$$
M=M(x,t) =\lim_{D_{t}\ni y\to x} \nabla X^{-1}(y,t)
$$
and
$$
N=N(x,t) =\lim_{\mathbb{R}^{d}\setminus \overline{D_{t}}\ni y\to x}
\nabla X^{-1}(y,t).
$$
Then $M(x,t)$ and $N(x,t)$ are linear mappings from $\R^d$ into
itself that depend continuously on $x \in \partial D_t.$ Let
$\operatorname{Tan} (\partial D_{t},x)$ stand for the tangent space
to~$\partial D_{t}$ at the point~$x\in
\partial D_{t}$.

\begin{lemma}\label{Mboundary}
\hspace{-1pt} For $x \in \partial D_t$ the linear mappings $M\!$ and
$N\!$ coincide on $\operatorname{Tan} (\partial D_{t},x)$ with the
differential at~$x\!$ of~$X^{-1}(x,t),$ viewed as a differentiable
mapping from~$\partial D_{t}$ onto $\partial D_{0}$. In particular,
$M$ and $N$ map $\operatorname{Tan}(\partial D_{t},x)$ into
$\operatorname{Tan} (\partial D_{0},X^{-1}(x,t))$.
\end{lemma}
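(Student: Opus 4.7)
The plan is to show that $X^{-1}(\cdot,t)$ is genuinely $C^{1}$ up to $\partial D_{t}$ from each side, with the one-sided boundary value of its gradient being $M$ (resp.\ $N$), and then apply the chain rule along a curve lying in $\partial D_{t}$ to identify $MT$ and $NT$ on tangent vectors with the intrinsic tangential differential of the boundary diffeomorphism $X^{-1}(\cdot,t)\colon \partial D_{t}\to \partial D_{0}$.

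First, using the Hölder estimate \eqref{gradvgam} together with \eqref{gradvinf} and the Grönwall computation already carried out just before the lemma statement, $\nabla X^{-1}(\cdot,t)$ extends continuously from $D_{t}$ up to $\overline{D_{t}}$, the boundary values being precisely $M(\cdot,t)$. A standard real-variable fact (a function which is $C^{1}$ on an open set $U$ and whose gradient extends continuously to $\overline{U}$ is $C^{1}$ on $\overline{U}$, with one-sided differential at boundary points given by the extended gradient) then upgrades $X^{-1}(\cdot,t)\bigl|_{\overline{D_{t}}}$ to a genuinely $C^{1}$ mapping on $\overline{D_{t}}$ whose differential at $x\in\partial D_{t}$ equals $M(x,t)$. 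The identical argument, using the Hölder estimate on $\mathbb{R}^{d}\setminus \overline{D_{t}}$, makes $X^{-1}(\cdot,t)\bigl|_{\overline{\mathbb{R}^{d}\setminus D_{t}}}$ into a $C^{1}$ mapping with differential $N(x,t)$ at $x\in\partial D_{t}$.

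Next, by Theorem~\ref{theo2.1} and \eqref{fluxX} the restriction $X(\cdot,t)\bigl|_{\partial D_{0}}$ is in $C^{1+\gamma}(\partial D_{0},\mathbb{R}^{d})$ and is a bilipschitz homeomorphism of $\partial D_{0}$ onto $\partial D_{t}$; hence $X^{-1}(\cdot,t)\bigl|_{\partial D_{t}}$ is a $C^{1+\gamma}$ diffeomorphism from $\partial D_{t}$ onto $\partial D_{0}$, whose intrinsic differential at $x$ I denote by $L(x)\colon\operatorname{Tan}(\partial D_{t},x)\to \operatorname{Tan}(\partial D_{0},X^{-1}(x,t))$. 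Given $T\in \operatorname{Tan}(\partial D_{t},x)$, choose a $C^{1}$ curve $c\colon (-\varepsilon,\varepsilon)\to \partial D_{t}$ with $c(0)=x$ and $c'(0)=T$. Since $c(s)\in\partial D_{t}\subset \overline{D_{t}}$ for all $s$, the chain rule applied to the $C^{1}$ extension from the interior side gives
\[
\frac{d}{ds}\Big|_{s=0} X^{-1}(c(s),t)= M(x,t)\,T,
\]
while the same derivative, computed intrinsically on $\partial D_{t}$, equals $L(x)T$; hence $M(x,t)T=L(x)T$. Applying the identical argument using the $C^{1}$ extension from the exterior side gives $N(x,t)T=L(x)T$. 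Since $L(x)$ takes values in $\operatorname{Tan}(\partial D_{0},X^{-1}(x,t))$, the same is true for the restrictions of $M(x,t)$ and $N(x,t)$ to $\operatorname{Tan}(\partial D_{t},x)$.

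The only nontrivial step is the $C^{1}$-up-to-the-boundary upgrade of Step~1; but this is a classical real-analysis statement once one has the continuous one-sided extension of $\nabla X^{-1}(\cdot,t)$, which the preceding computation in the excerpt already provides. Everything else is the chain rule.
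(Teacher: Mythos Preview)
Your argument is correct and follows the same overall strategy as the paper: pick a $C^{1}$ curve $c$ in $\partial D_{t}$ through $x$ with $c'(0)=T$, and identify the derivative of $X^{-1}\circ c$ at $0$ with both the intrinsic tangential differential and the one-sided gradient limit. The only difference is packaging. You invoke an abstract ``$C^{1}$-up-to-the-boundary'' extension principle (continuous on $\overline{U}$, $C^{1}$ on $U$, $\nabla$ extends continuously $\Rightarrow$ $C^{1}$ on $\overline{U}$) and then the chain rule; the paper instead proves exactly the needed instance by hand: it pushes the curve inward to $z(\theta)-\eta\vec{n}\in D_{t}$, differentiates there, and passes to the limit $\eta\to 0$ using uniform (hence distributional) convergence of $X^{-1}(z(\theta)-\eta\vec{n},t)$ together with pointwise bounded convergence of $\nabla X^{-1}(z(\theta)-\eta\vec{n},t)\,z'(\theta)$ to $M(z(\theta),t)\,z'(\theta)$. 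Your route is cleaner and conceptually identical; the paper's is more self-contained. The abstract extension fact you cite does need a mild regularity hypothesis on $\partial U$ (e.g.\ Lipschitz, so nearby points can be joined inside $\overline{U}$ by short paths), which is of course available here since $\partial D_{t}$ is $C^{1+\gamma}$.
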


\begin{proof}
Let $T$ be a tangent vector to~$\partial D_{t}$ at~$x$, $a>0$ and
$z\colon (-a,a) \to \partial D_{t}$ a curve of class~$C^{1}$ such
that $z(0)=x$ and $z'(0)=T$. Let $\vec{n}$ be the exterior unit
normal vector to~$\partial D_{t}$ at~$x$. Let $0<\varepsilon <a$ so
small that $z(\theta)-\eta \vec{n}\in D_{t}$ provided
$|\theta|<\varepsilon$ and $0<\eta <\varepsilon$. Then
$$
X^{-1}(z(\theta)-\eta \vec{n},t) \xrightarrow{\eta\to 0}
X^{-1}(z(\theta),t)\text{ uniformly in } \theta \in
(-\varepsilon,\varepsilon).
$$
Hence
$$
\frac{d}{d\theta} X^{-1} (z(\theta)-\eta \vec{n},t)
\xrightarrow{\eta \to 0} \frac{d}{d\theta} X^{-1} (z(\theta),t)
$$
as distributions on $(-\varepsilon,\varepsilon)$.

On the other hand, for $\theta\in (-\varepsilon,\varepsilon)$ we
have
$$
\frac{d}{d\theta} X^{-1} (z(\theta)-\eta \vec{n},t)=\nabla
X^{-1}(z(\theta)-\eta\vec{n},t)\left(\frac{dz(\theta)}{d\theta}\right)
 \xrightarrow{\eta \to 0} M (z(\theta),t)\left(  \frac{dz(\theta)}{d\theta} \right)
$$
pointwise and boundedly. Thus
$$
\frac{d}{d\theta} X^{-1} (z(\theta),t)=M
(z(\theta),t)\left(\frac{dz(\theta)}{d\theta}\right), \quad \theta
\in (-\epsilon, \epsilon),
$$
which, for $\theta=0$, gives
$$
 DX^{-1}(T) = M(T),
$$
where $DX^{-1}$ is the differential at~$x$ of $X^{-1}$ as a smooth
map from $\partial D_{t}$ into $\partial D_{0}$. The argument for
$N$ is similar. The proof is complete.
\end{proof}

Let $\Phi_{0}$ be a defining function for $D_{0}$. Then $\Phi_{0}\in
C^{1+\gamma}(\mathbb{R}^{d})$, $D_{0}= \{ x\in \mathbb{R}^{d}:
\Phi_{0}(x)<0\},$ $\partial D_{0} = \{ x\in \mathbb{R}^{d}:
\Phi_{0}(x)=0\}$ and $\nabla \Phi_{0}(x)\ne 0$, $x\in
\partial D_{0}$. Set
$$
\varphi(x,t)=\Phi_{0}(X^{-1}(x,t)),\quad x\in\mathbb{R}^{d},
$$
so that $D_{t}= \{ x\in \mathbb{R}^{d}: \varphi(x,t)<0\}$, $\partial
D_{t}= \{ x\in \mathbb{R}^{d}: \varphi(x,t)=0\}$ and $ \nabla
\varphi (y,t)=\nabla \Phi_{0}\circ \nabla X^{-1}(y,t), $  \; for $y
\notin \partial D_t.$  Our next task is computing the jump of
$\nabla \varphi$ at $\partial D_t.$

Let $\vec{n}$ and $\overrightarrow{n_{0}}$ be the exterior unit
normal vectors to $\partial D_{t}$ and $\partial D_{0}$
respectively, at the points~$x \in \partial D_t$ and
$X^{-1}(x,t)=\alpha \in \partial D_0$. Given any
vector~$\vec{u}\in\mathbb{R}^{d}$, we have
$$
\lim_{D_{t}\ni y\to x} \langle \nabla \varphi (y,t),\vec{u}\rangle =
\langle \nabla \Phi_{0}(\alpha), \lim_{D_{t}\ni y\to x} \nabla
X^{-1}(y,t)(\vec{u}) \rangle = \langle \nabla \Phi_{0}(\alpha),
M(\vec{u})\rangle.
$$
Set $\vec{u} = \lambda \vec{n} + T$, $\lambda\in\mathbb{R}$ and
$T\in \operatorname{Tan} (\partial D_{t},x)$, and
$$
M(\vec{n})=A \overrightarrow{n_{0}}+T_{0},\quad A\in\mathbb{R},
\quad T_{0}\in\operatorname{Tan} (\partial D_{0},\alpha).
$$
Then
$$
M(\vec{u})= \lambda M(\vec{n})+ M(T) =\lambda
A\overrightarrow{n_{0}} +\lambda T_{0}+M(T)
$$
and, since $\lambda T_{0}+ M(T)\in\operatorname{Tan} (\partial
D_{0},\alpha)$,
$$
\langle \nabla \Phi_{0}(\alpha), M(\vec{u})\rangle =\lambda A\langle
\nabla \Phi_{0}(\alpha),\overrightarrow{n_{0}}\rangle =\lambda A
|\nabla \Phi_{0}(\alpha)| = A |\nabla \Phi_{0}(\alpha)| \langle
\vec{n}, \vec{u}\rangle.
$$
Therefore
$$
\lim_{D_{t}\ni y\to x} \nabla \varphi (y,t) =A|\nabla
\Phi_{0}(\alpha)| \vec{n}.
$$
Take an orthonormal basis $\{\tau_{1},\dotsc,\tau_{d-1}\}$ of
$\operatorname{Tan}(\partial D_{t},x)$ and an orthonormal basis $\{
\tau_{1}^{0},\dotsc, \tau^{0}_{d-1}\}$ of $\operatorname{Tan}
(\partial D_{0},\alpha)$ so that
$$
\det (\vec{n}, \tau_{1},\dotsc, \tau_{d-1})=\det
(\overrightarrow{n_{0}}, \tau_{1}^{0},\dotsc,\tau^{0}_{d-1})=1.
$$
Call $D$ the differential of $X^{-1}(x,t)$ at~$x$ as a smooth
mapping from~$\partial D_{t}$ into $\partial D_{0}$. Then the matrix
of~$M$ in the above basis is
$$
M= \left(
\begin{array}{cccc}
A &0 &\hdots &0\\
A_{1}\\
\vdots &\multicolumn{3}{c}{\text{Matrix of $D$}}\\
A_{d-1}
\end{array}
\right).
$$
Taking determinants
$$
\det M=A\det D.
$$
Now $\det M$ is the limit  of $\det \nabla X^{-1}(y,t)$ as $y \in
D_t$ tends to $x,$  which turns out to be $e^{t}.$  This is so
because $\det \nabla X(\alpha,t) = e^{-t}$ for $\alpha \in D_0,$
which in turn is due to the the well-known fact that (\cite[p.
5]{MB})
$$
\frac{d}{dt} \det \nabla X(\alpha,t) = \operatorname{div}
v(X(\alpha,t),t)\,\det \nabla X(\alpha,t)= - \det \nabla
X(\alpha,t).
$$
 Therefore
$$
\lim_{D_{t}\ni y\to x} \nabla \varphi (y,t) = e^{t} \frac{|\nabla
\Phi_{0}(\alpha)|} {\det D} \vec{n}.
$$
Arguing similarly for the exterior side, where $\det \nabla 
X^{-1}(y,t)$ is $1$, we conclude that
$$
\lim_{\mathbb{R}^{d}\setminus \overline{D_{t}}\ni y\to x} \nabla
\varphi(y,t)=\frac{|\nabla \Phi_{0}(\alpha)|}{\det D} \vec{n}.
$$
Then, clearly, the function
$$
\Phi (y,t)=e^{-t}\varphi (y,t) \newchi_{D_{t}}(y)
+\varphi(y,t)\newchi_{\mathbb{R}^{d}\setminus \overline{D_{t}}}(y)
$$
has no gradient jump at $\partial D_{t}$ and so $\Phi(y,t)$ is a
defining function for $D_{t}$ of class~$C^{1+\gamma}$. The material
derivative of $\Phi(y,t)$ is
$$
\frac{D\Phi (y,t)}{D t}=-\Phi (y,t) \newchi_{D_{t}}(y)
$$
and $\Phi (y,0)=\Phi_{0}(y)$. Hence the function determined by
\eqref{thomas} is of class~$C^{1+\gamma}$, as desired.

\begin{acknowledgements}
A.Bertozzi was partially supported by NSF grants CMMI-1435709 and DMS-0907931 ; J. Garnett by NSF grant
DMS 1217239;  T. Laurent by NSF DMS 1414396; J. Verdera by  MTM2013-44699-P
(MINECO) and 2014SGR75 (Generalitat de Catalunya). J.Verdera is grateful to the Department of Mathematics of
UCLA for their hospitality in October 2012.
 \end{acknowledgements}

\bibliographystyle{plain}
%\bibliography{patch}

\begin{tabular}{l}
Andrea L. Bertozzi\\
Department of Mathematics\\
University of California at Los Angeles\\
Los Angeles, CA 90095\\
\\
{\it E-mail:} {\tt bertozzi@math.ucla.edu}\\ \\
John B. Garnett\\
Department of Mathematics\\
University of California at Los Angeles\\
Los Angeles, CA 90095 \\
\\
{\it E-mail:} {\tt jbg@math.ucla.edu}\\ \\
Thomas B. Laurent\\
Department of Mathematics\\
Loyola Marymount University\\
Los Angeles, CA 90045 \\
\\
{\it E-mail:} {\tt tlaurent@lmu.edu}\\ \\
Joan Verdera\\
Departament de Matem\`{a}tiques\\
Universitat Aut\`{o}noma de Barcelona\\
08193 Bellaterra, Barcelona, Catalonia\\
\\
{\it E-mail:} {\tt jvm@mat.uab.cat}\\ \\
\end{tabular}
\end{document}